\newtheorem{theo}{Theorem}[section]
\newtheorem*{theo*}{Theorem}
\newtheorem{lemm}[theo]{Lemma}
\newcommand{\C}{\mathbb{C}}
\newcommand{\lchi}{|L(1,\chi)|}
\title[]{On two mean square averages of Dirichlet $L$-function}
\begin{document}
 
 \keywords{$L$-functions, trigonometric sums, Jordan totient function, Euler totient function, mean square averages, Gauss sum, Ramanujan sum, Bernoulli numbers}
 \subjclass[2010]{11M06, 11L05, 11L03}
 
 \author[N E Thomas]{Neha Elizabeth Thomas}
 \address{Department of Mathematics, University College, Thiruvananthapuram, Kerala - 695034, India}
 \email{nehathomas2009@gmail.com}
 \author[A Chandran]{Arya Chandran}
 \address{Department of Mathematics, University College, Thiruvananthapuram, Kerala - 695034, India}
 \email{aryavinayachandran@gmail.com}
\author[K V Namboothiri]{K Vishnu Namboothiri}
\address{Department of Mathematics, Government College, Ambalapuzha, (Affiliated to the University of Kerala, Thiruvananthapuram) Kerala - 688561, INDIA\\Department of Collegiate Education, Government of Kerala, India}
\email{kvnamboothiri@gmail.com}

 \begin{abstract}
Finding the mean square averages of the Dirichlet $L$-functions over Dirichlet characters $\chi$ of same parity is an active problem in number theory. Here we explicitly evaluate such averages of $L(3,\chi)$ and $L(4,\chi)$ using certain trigonometric sums and Bernoulli polynomials and express them in terms of the Euler totient function $\phi$ and the Jordan totient function $J_s$.
 \end{abstract}

 \maketitle
\section{Introduction}
Let $k$ be a natural number $\geq 3$. A Dirichlet character $\chi$ is defined to be odd if $\chi(-1)=-1$ and even if $\chi(-1)=1$. The Dirichlet $L$-function $L(s,\chi)$ is defined by the infinite series $\sum\limits_{n=1}^{\infty}\frac{\chi(n)}{n^s}$ where $s\in\C$ with $Re\,(s)>1$. It is an important function in number theory especially due to its connection with the Rieman zeta function $\zeta(s)$. For rational integer $r$, the problem of computing exact values of 
\begin{align}\label{eqn:gen_sum}
 \sum\limits_{\substack{\chi \text{ mod }k\\\chi(-1)=(-1)^r}}|L(r,\chi)|^2
\end{align}
and thus finding the mean square averages of this sum has been attempted in various cases by many.  

 In 1982, Walum \cite{walum1982exact} gave an exact formula for the sum (\ref{eqn:gen_sum}) with $r=1$. Louboutin (\cite{louboutin1999mean}) computed  the sum of $\lchi^2$  over all odd primitive Dirichlet characters modulo $k$. See \cite[Chapter 6]{tom1976introduction} for the definition of primitivity of Dirichlet characters. In \cite{louboutin1993quelques}, Louboutin gave an exact formula for the sum of $\lchi^2$ over all odd Dirichlet characters in terms of the prime divisors of $k$ and the Euler totient function $\phi$.  He mainly used the orthogonality properties of characters and  some trigonometric identites in his computations. Using the same techniques, in \cite{louboutin2001mean}  he derived exact  formulae for the  general versions of these sums in two cases : $\chi$ even and $\chi$ odd. 

E. Alkan in \cite{alkan2011mean} derived exact formulae for the sums $\sum\limits_{\chi \, \text{odd}}|L(1,\chi)|^2$ and $\sum\limits_{\chi \, \text{even}}|L(2,\chi)|^2$ using weighted averages of Gauss and Ramanujan sums. His formulae involved Jordan totient function and Euler totient function (See the next section for definitions). Alkan employed certain exact evaluations of trigonometric sums appearing in \cite{alkan2011values} to enable these computations. The computations he performed in this paper were so extensive and beautiful so that some of the identities he derived during these computations gave rise a sequence of papers starting with those by L. Toth \cite{toth2014averages} and K V Nambooothiri \cite{namboothiri2017certain}.

After Alkan, many other authors also attempted the problem of finding mean square values of $L(r,\chi)$ for various values of $r$. A general formula was provided by T. Okamoto and T. Onozuka in \cite{okamoto2015mean} and \cite{okamoto2017mean} using a technique suggested by S. Louboutin in \cite{louboutin2001mean}. Note that this technique was different from the one used by Alkan in \cite{alkan2011mean}. The mean square values and certain related problems were dealt in some other papers as well, see example,   \cite{wu2012mean},  \cite{zhang2015hybrid}, and the recent papers \cite{lin2019mean}, \cite{zhang2020certain}. In \cite{lin2019mean}, X. Lin provided a general inductive formula for computing the sum (\ref{eqn:gen_sum}).

Some other closely related problems were attempted by a few authors. In \cite{zhang2003problem}, W. Zhang derived an asymptotic (not exact) formula for $\lchi^4$ over odd Dirichlet characters. He used Abel's identity and Cauchy's inequality in addition to the orthogonality properties of characters to arrive at his estimate. Alkan in \cite{alkan2013averages} derived the sum $\sum\limits_{\chi \, \text{odd}}(L(1,\chi))^r$ where $r\geq 1$. In \cite{louboutin2015twisted}, S. Louboutin computed $\sum\limits_{\chi \, \text{odd}}\chi(c)|L(1,\chi)|^2$ where c, a positive integer with some extra conditions. Computation of such sums, known as twisted sums is another problem of active interest.

Note that Alkan's techniques in \cite{alkan2011mean} were completely different from the other derivations used in computing the square sums. It was also observed by Alkan that his techniques could be used to determine the sum (\ref{eqn:gen_sum}) for any larger $r(\geq 3)$, but at the cost of increasingly complex computations. We here undertake these complex comuptations and use the same techniques used by Alkan in \cite{alkan2011mean} to derive exact formulae for $\sum\limits_{\chi(-1)=-1}|L(3,\chi)|^2$ and $\sum\limits_{\chi(-1)=1}|L(4,\chi)|^2$.
We also derive some trigonometric identities during these computations that could be of independent interest.

 \section{Notations and some elementary results}
 In this section we introduce the basic definitions and some identities that we use throughout this paper. The definitions of terms we do not define, but appear in this paper can be found in \cite{tom1976introduction} or \cite{montgomery2006multiplicative}. For a positive integer $k$, if $\chi$ is a  Dirichlet character  modulo $k$, then $\chi(-1)=(-1)^n$ for some natural number $n$. The parity of $\chi$ is the parity of this $n$. Hence $\chi$ is odd if $n$ is odd and  even otherwise.
 
 The Gauss sum $G(z,\chi)$ for any complex number $z$ is defined as 
 
 \begin{align}
 G(z,\chi):= \sum\limits_{m=1}^{k}\chi(m)e^{\frac{2\pi imz}{k}}.
 \end{align} When $\chi=\chi_0$ is  the principal character, Gauss sum becomes the Ramanujan sum $R_k(z)$:
 \begin{align}
  R_k(z):= \sum\limits_{\substack{m=1\\(m, k)=1}}^{k}e^{\frac{2\pi imz}{k}}.
 \end{align}

 The Jordan totient function $J_k(n)$ is defined by  \begin{align}
J_k(n) := n^k\prod_{\substack{p|n\\p\text{ prime}}}\left(1-\frac{1}{p^k}\right).
\end{align} 

 A similar type of function which we use in this paper is $\phi_k(n)$,   defined as the sum of the $k$th powers of numbers $\leq n$ and relatively prime to $n$. 
 
By $B_q$ we mean the $q$\textsuperscript{th} Bernoulli number. See \cite[Chapter 12]{tom1976introduction} for definition and other properties of Bernoulli numbers and Bernoulli polynomials. The values of Bernoulli numbers we use in this paper are  $B_0=1$, $B_1=-\frac{1}{2}$, $B_2=\frac{1}{6}$,   $B_3=0$, $B_4=-\frac{1}{30}$.

For a positive integer $m$, by $S(m, \chi)$ we mean the sum
\begin{align}                                 
S(m, \chi):=\sum\limits_{j=1}^k(\frac{j}{k})^mG(j, \chi).
\end{align}
 Some other important identities that we use in our computation are listed below:

\begin{align}
& \sum\limits_{\substack{1\leq m\leq k \\ (m, k)=1}} \frac{1}{\sin^2(\frac{\pi m}{k})}=\frac{J_2(k)}{3} \label{eqn:1bysin2}\text{ \cite[Identity 5.16]{alkan2011values}}.\\
  &\sum\limits_{j=1}^{k-1}j^re^{\frac{2\pi imj}{k}}=\sum\limits_{j=1}^{k-1}\binom{r}{j} k^j\lim\limits_{w\rightarrow \frac{2\pi im}{k}}\frac{d^{r-j}}{dw^{r-j}} \left(\frac{1}{e^w-1}\right) \label{eqn:jrediff}\text{ \cite[Section 2]{alkan2011values}}.
 \end{align}
 The following identities can be easily computed using (\ref{eqn:jrediff}): 
 \begin{align}
  &\sum\limits_{s=1}^{k-1}se^{\frac{2\pi ims}{k}}=\frac{k}{e^{\frac{2\pi im}{k}}-1}.\\
 &\sum\limits_{s=1}^{k-1}se^{\frac{-2\pi ims}{k}}=-k-\frac{k}{e^{\frac{2\pi im}{k}}-1}.\\
 &\sum\limits_{s=1}^{k-1}s^2e^{\frac{2\pi ims}{k}}=\frac{k^2}{e^{\frac{2\pi im}{k}}-1}-\frac{2ke^{\frac{2\pi im}{k}}}{(e^{\frac{2\pi im}{k}}-1)^2}.\\
& \sum\limits_{s=1}^{k-1}s^2e^{\frac{-2\pi ims}{k}}=-k^2-\frac{k^2}{e^{\frac{2\pi im}{k}}-1}-\frac{2ke^{\frac{2\pi im}{k}}}{(e^{\frac{2\pi im}{k}}-1)^2}.
 \end{align}
  \begin{align}\label{1}
 \frac{(-1)^{v+1}kr!}{i^r2^{r-1}\pi^r}L(r,\chi)=\sum\limits_{q=0}^{2[\frac{r}{2}]}\binom{r}{q}
 B_q S(r-q,\chi)\text{ \cite[Theorem 1]{alkan2011values}  } 
 \end{align}
 where $ \chi$  and  $r\geq1$ have same parity. 
 
Now we may derive a useful identity quickly. Using identity (5.12) in \cite{alkan2011values} we have
  \begin{align*}
   \frac{k^4}{\pi^4}L(4, \chi_0)
&=\frac{2}{3}\sum\limits_{\substack{1\leq m\leq k \\ (m, k)=1}}
\frac{1}{\sin^2(\frac{\pi m}{k})}+\frac{1}{2}\sum\limits_{\substack{1\leq m\leq k \\ (m, k)=1}}
\frac{\cos(\frac{2\pi m}{k})}{\sin^4(\frac{\pi m}{k})}\\
&=\frac{2}{3}\sum\limits_{\substack{1\leq m\leq k \\ (m, k)=1}}
\frac{1}{\sin^2(\frac{\pi m}{k})}+\frac{1}{2}\sum\limits_{\substack{1\leq m\leq k \\ (m, k)=1}}
\frac{(1-2\sin^2(\frac{\pi m}{k}))}{\sin^4(\frac{\pi m}{k})}\\
&=\frac{1}{2}\sum\limits_{\substack{1\leq m\leq k \\ (m, k)=1}}
\frac{1}{\sin^4(\frac{\pi m}{k})}-\frac{1}{3}\sum\limits_{\substack{1\leq m\leq k \\ (m, k)=1}}
\frac{1}{\sin^2(\frac{\pi m}{k})}.
 \end{align*}
 Using $ \frac{k^4}{\pi^4}L(4, \chi_0)=\frac{k^4}{\pi^4}\zeta(4)\prod\limits_{\substack{p|n\\p\text{ prime}}}\left(1-\frac{1}{p^4}\right)=\frac{J_4(k)}{90}$ \cite[Chapter 11]{tom1976introduction} and identity (\ref{eqn:1bysin2}) above we get 
 \begin{align}\label{eqn:1bysin4}
  \sum\limits_{\substack{1\leq m\leq k \\ (m, k)=1}}\frac{1}{\sin^4(\frac{\pi m}{k})}=\frac{J_4(k)}{45}+\frac{2}{9}J_2(k).
  \end{align}
  
We now state four identities. 
\begin{lemm}
\begin{align}
&\sum\limits_{j=1}^{k-1}j^3e^{\frac{2\pi imj}{k}}=\frac{k^3}{e^{\frac{2\pi im}{k}}-1}-\frac{3k^2e^{\frac{2\pi im}{k}}}{(e^{\frac{2\pi im}{k}}-1)^2}+\frac{3ke^{\frac{2\pi im}{k}}+3ke^{\frac{4\pi im}{k}}}{(e^{\frac{2\pi im}{k}}-1)^3}.\label{j^3}\\
&\sum\limits_{j=1}^{k-1}j^3e^{\frac{-2\pi imj}{k}}=-\frac{k^3e^{\frac{2\pi im}{k}}}{e^{\frac{2\pi im}{k}}-1}-\frac{3k^2e^{\frac{2\pi im}{k}}}{(e^{\frac{2\pi im}{k}}-1)^2}-\frac{(3ke^{\frac{2\pi im}{k}}+3ke^{\frac{4\pi im}{k}})}{(e^{\frac{2\pi im}{k}}-1)^3}.\label{j^-3}\\
&\sum\limits_{j=1}^{k-1}j^4e^{\frac{2\pi imj}{k}}\label{j^4}=\frac{k^4}{e^{\frac{2\pi im}{k}}-1}-\frac{4k^3e^{\frac{2\pi im}{k}}}{(e^{\frac{2\pi im}{k}}-1)^2}+\frac{6k^2e^{\frac{2\pi im}{k}}+6k^2e^{\frac{4\pi im}{k}}}{(e^{\frac{2\pi im}{k}}-1)^3}\\&-\frac{(4ke^{\frac{2\pi im}{k}}+16ke^{\frac{4\pi im}{k}}+4ke^{\frac{6\pi im}{k}})}{(e^{\frac{2\pi im}{k}}-1)^4}\nonumber. \\
&\sum\limits_{j=1}^{k-1}j^4e^{\frac{-2\pi imj}{k}}\label{j^-4}=-\frac{k^4e^{\frac{2\pi im}{k}}}{e^{\frac{2\pi im}{k}}-1}-\frac{4k^3e^{\frac{2\pi im}{k}}}{(e^{\frac{2\pi im}{k}}-1)^2}-\frac{(6k^2e^{\frac{2\pi im}{k}}+6k^2e^{\frac{4\pi im}{k}})}{(e^{\frac{2\pi im}{k}}-1)^3}\\&-\frac{(4ke^{\frac{2\pi im}{k}}+16ke^{\frac{4\pi im}{k}}+4ke^{\frac{6\pi im}{k}})}{(e^{\frac{2\pi im}{k}}-1)^4} \nonumber.
\end{align}
\end{lemm}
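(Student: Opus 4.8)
The plan is to obtain all four identities from the single differentiation formula (\ref{eqn:jrediff}), specialized to $r=3$ and $r=4$. Throughout I write $E=e^{2\pi i m/k}$ and $g(w)=\frac{1}{e^w-1}$, and I note that since $1\le m\le k-1$ we have $E\neq 1$, so no genuine limit is needed: the right-hand side of (\ref{eqn:jrediff}) is a finite combination of honest values of the derivatives $g^{(r-q)}$ evaluated at $w=2\pi i m/k$. Because $\binom{r}{q}=0$ whenever $q>r$, the sum in (\ref{eqn:jrediff}) effectively runs only over $1\le q\le r$. (Conceptually the reason only $q\ge 1$ survives is that (\ref{eqn:jrediff}) arises from Leibniz-differentiating $\frac{e^{kw}-1}{e^w-1}$, and the factor $e^{kw}-1$ vanishes at $w=2\pi i m/k$, killing the $q=0$ contribution.)

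First I would compute the needed derivatives of $g$ once and for all. With $\frac{dE}{dw}=E$, a direct quotient-rule calculation gives $g'=-\frac{E}{(E-1)^2}$, then $g''=\frac{E+E^2}{(E-1)^3}$, and finally $g'''=-\frac{E+4E^2+E^3}{(E-1)^4}$, each obtained after clearing denominators over a common $(E-1)^n$. Substituting into (\ref{eqn:jrediff}) with $r=3$, where the surviving coefficients $\binom{3}{1}k=3k$, $\binom{3}{2}k^2=3k^2$, $\binom{3}{3}k^3=k^3$ multiply $g''$, $g'$, $g$ respectively, yields (\ref{j^3}) after recognizing $E=e^{2\pi i m/k}$ and $E^2=e^{4\pi i m/k}$. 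The analogous substitution with $r=4$, where the coefficients $4k,6k^2,4k^3,k^4$ multiply $g''',g'',g',g$, produces (\ref{j^4}), with $E^3=e^{6\pi i m/k}$.

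For the two negative-exponent identities (\ref{j^-3}) and (\ref{j^-4}) I would reindex by $j\mapsto k-j$. Since $e^{-2\pi i m(k-j)/k}=e^{2\pi i mj/k}$, this converts $\sum_{j=1}^{k-1}j^{r}e^{-2\pi i mj/k}$ into $\sum_{j=1}^{k-1}(k-j)^{r}e^{2\pi i mj/k}$, and expanding $(k-j)^r$ by the binomial theorem reduces everything to the positive-exponent power sums for $j^0,j,j^2,\dots,j^r$. Here $\sum_{j=1}^{k-1}e^{2\pi i mj/k}=-1$ (the geometric sum over a complete residue system vanishes, leaving only the subtracted $j=0$ term), the sums for $j$ and $j^2$ are the elementary identities already recorded just above the Lemma, and the sums for $j^3,j^4$ are (\ref{j^3}) and (\ref{j^4}) just established. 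Collecting the resulting terms over the denominators $(E-1)^n$ and simplifying — using for instance $-k^{r}-\frac{k^{r}}{E-1}=-\frac{k^{r}E}{E-1}$ to absorb the free constant $-k^{r}$ — gives (\ref{j^-3}) and (\ref{j^-4}). (Alternatively, one may apply (\ref{eqn:jrediff}) directly with $m$ replaced by $-m$, i.e.\ with $E$ replaced by $E^{-1}$, and then rewrite in terms of $E$.)

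I expect the only real work to lie in the algebraic bookkeeping of the final step: combining the several fractions over the common denominators and verifying that the numerators collapse to the compact forms stated, most notably in the $r=4$ case, where $g'''$ contributes the quartic-denominator term and the binomial expansion of $(k-j)^4$ produces five summands that must telescope. The conceptual content is entirely carried by (\ref{eqn:jrediff}) together with the three elementary derivatives of $g$; everything else is careful but routine simplification.
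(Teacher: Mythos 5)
Your proposal is correct and follows essentially the same route as the paper: both derive (\ref{j^3}) and (\ref{j^4}) by specializing (\ref{eqn:jrediff}) to $r=3,4$ and computing the successive derivatives of $1/(e^w-1)$. The only cosmetic difference is in the negative-exponent identities, where you reindex $j\mapsto k-j$ and expand $(k-j)^r$ binomially, while the paper substitutes $m\mapsto k-m$ into the already-established formula and simplifies via $e^{-2\pi im/k}-1=-(e^{2\pi im/k}-1)/e^{2\pi im/k}$ --- a route you yourself record as an alternative.
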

\begin{proof}
The identity (\ref{j^3}) can be derived by putting $r=3$ in  (\ref{eqn:jrediff}) and by applying the limit.

Now we derive the second identity as follows:
\begin{align*}
\sum\limits_{j=1}^{k-1}j^3e^{\frac{-2\pi imj}{k}}
&=\sum\limits_{j=1}^{k-1}j^3e^{\frac{2\pi i(k-m)j}{k}}\\
&=\frac{k^3}{e^{\frac{2\pi i(k-m)}{k}}-1}-\frac{3	k^2	e^{\frac{2\pi i(k-m)}{k}}}{(e^{\frac{2\pi i(k-m)}{k}}-1)^2}+\frac{3ke^{\frac{2	\pi i(k-m)}{k}}+3ke^{\frac{4\pi i(k-m)}{k}}}{(e^{\frac{2\pi i(k-m)}{k}}-1)^3}\\
&=\frac{k^3}{e^{\frac{-2\pi im}{k}}-1}-\frac{3	k^2	e^{\frac{-2\pi im}{k}}}{(e^{\frac{-2\pi im}{k}}-1)^2}+\frac{3ke^{\frac{-2	\pi im}{k}}+3ke^{\frac{-4\pi im}{k}}}{(e^{\frac{-2\pi im}{k}}-1)^3}\\
&=-\frac{k^3}{e^{\frac{2\pi im}{k}}-1}-\frac{3	k^2	e^{\frac{2\pi im}{k}}}{(e^{\frac{2\pi im}{k}}-1)^2}-\frac{(3ke^{\frac{2	\pi im}{k}}+3ke^{\frac{4\pi im}{k}})}{(e^{\frac{2\pi im}{k}}-1)^3}.
\end{align*}
The last line above is obtained from the fact that $$e^{\frac{-2\pi im}{k}}-1=\frac{1-e^{\frac{2\pi im}{k}}}{e^{\frac{2\pi im}{k}}}=-\frac{(e^{\frac{2\pi im}{k}}-1)}{e^{\frac{2\pi im}{k}}}.$$
\end{proof}
 \section{Main Results and proofs}
We state below the main results we prove in this paper. 

\begin{theo}\label{L(3,x)}
The formula
\begin{align*}
\sum\limits_{\substack{\chi(\text{mod }  k) \\ \chi \text{ odd}}}|L(3, \chi)|^2 = \frac{\pi^6}{90k^6}\phi(k)\left(\frac{J_6(k)}{21}-J_2(k)
\right)
\end{align*} 
holds for all $k\geq3$.

\end{theo}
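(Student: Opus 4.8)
The plan is to specialize the key identity (\ref{1}) to $r=3$ and odd $\chi$. Since the only surviving Bernoulli numbers are $B_0=1,\,B_1=-\tfrac12,\,B_2=\tfrac16$, it reads
\[
\frac{(-1)^{v+1}6k}{i^3 2^2\pi^3}\,L(3,\chi)=S(3,\chi)-\tfrac32 S(2,\chi)+\tfrac12 S(1,\chi).
\]
First I would expand each $S(m,\chi)=\sum_{j=1}^k(j/k)^mG(j,\chi)$ by substituting the Gauss sum and interchanging the order of summation. As $\chi$ is odd, hence nonprincipal, the $j=k$ term contributes $k^m\sum_n\chi(n)=0$, so $S(m,\chi)=k^{-m}\sum_{n=1}^k\chi(n)T_m(n)$ where $T_m(n)=\sum_{j=1}^{k-1}j^m e^{2\pi inj/k}$. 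The closed forms for $T_1,T_2$ stated before the Lemma and the identity (\ref{j^3}) for $T_3$ then express $L(3,\chi)=A\sum_n\chi(n)U(n)$ with $A=\tfrac{-2i\pi^3}{3(-1)^{v+1}k}$ and $U(n)$ an explicit rational function of $z:=e^{2\pi in/k}$.

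The decisive observation is that the Bernoulli weighting is precisely what annihilates the low-order poles: on collecting terms, the coefficients of $(z-1)^{-1}$ and $(z-1)^{-2}$ cancel identically and only
\[
U(n)=\frac{3z(1+z)}{k^2(z-1)^3}
\]
survives. Writing $z-1=2ie^{i\pi n/k}\sin(\pi n/k)$ and simplifying, I expect $U(n)$ to turn out \emph{purely imaginary}, namely $U(n)=\tfrac{3i}{4k^2}\cos(\pi n/k)/\sin^3(\pi n/k)$. This vanishing of the real part, together with the oddness $U(-n)=-U(n)$, is what eventually forces the answer into a single cosecant sum.

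Next I would form $|L(3,\chi)|^2=L(3,\chi)L(3,\overline\chi)$ using $\overline{U(n)}=U(-n)$, and sum over odd $\chi$. The orthogonality relation $\sum_{\chi\,\mathrm{odd}}\chi(n)\overline\chi(n')=\tfrac{\phi(k)}2\big(\mathbf1[n\equiv n']-\mathbf1[n\equiv -n']\big)$ for $n,n'$ coprime to $k$ collapses the double sum; because $U$ is purely imaginary and odd, both surviving pieces coincide and one is left with
\[
\sum_{\substack{\chi\,(\mathrm{mod}\,k)\\\chi\,\mathrm{odd}}}|L(3,\chi)|^2=|A|^2\,\phi(k)\sum_{\substack{1\le n\le k\\(n,k)=1}}|U(n)|^2.
\]
Substituting $|A|^2=\tfrac{4\pi^6}{9k^2}$ and $|U(n)|^2=\tfrac{9}{16k^4}\cos^2(\pi n/k)/\sin^6(\pi n/k)$ reduces everything to $\tfrac{\pi^6}{4k^6}\phi(k)\sum_{(n,k)=1}\cos^2(\pi n/k)/\sin^6(\pi n/k)$, and $\cos^2=1-\sin^2$ splits this into $\sum\sin^{-6}-\sum\sin^{-4}$.

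The main obstacle is the cosecant power sum $\sum_{(m,k)=1}\sin^{-6}(\pi m/k)$, which is not yet in hand. I would obtain it exactly as (\ref{eqn:1bysin4}) was derived: expand $\tfrac{k^6}{\pi^6}L(6,\chi_0)$ as a combination of $\sum\sin^{-2}$, $\sum\sin^{-4}$, $\sum\sin^{-6}$ via the cosecant expansion of \cite{alkan2011values}, then use $\tfrac{k^6}{\pi^6}L(6,\chi_0)=J_6(k)/945$ together with (\ref{eqn:1bysin2}) and (\ref{eqn:1bysin4}) to solve for the sixth-power sum; this should give $\tfrac{2}{945}J_6(k)+\tfrac1{45}J_4(k)+\tfrac8{45}J_2(k)$. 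Feeding this and (\ref{eqn:1bysin4}) back, the $J_4$ terms cancel and $\sum_{(m,k)=1}\cos^2/\sin^6=\tfrac{2}{945}J_6(k)-\tfrac{2}{45}J_2(k)$, which yields the stated $\tfrac{\pi^6}{90k^6}\phi(k)\big(J_6(k)/21-J_2(k)\big)$. Beyond establishing this one trigonometric identity, the remaining work is careful bookkeeping of the pole cancellation and of the constant $A$.
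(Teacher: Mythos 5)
Your proposal is correct, and I verified the endgame numerically: $\tfrac{\pi^6}{4k^6}\phi(k)\bigl(\tfrac{2}{945}J_6(k)-\tfrac{2}{45}J_2(k)\bigr)$ equals the stated $\tfrac{\pi^6}{90k^6}\phi(k)\bigl(\tfrac{J_6(k)}{21}-J_2(k)\bigr)$. However, your route is genuinely different from, and substantially leaner than, the paper's. The paper first uses the opposite-parity relation $S(2,\chi)=S(1,\chi)$ to write $L(3,\chi)=\tfrac{-2i\pi^3}{3k}\bigl[S(3,\chi)-S(1,\chi)\bigr]$, then squares this \emph{difference of two sums}, producing four double sums in $j,s$; each of these is expanded into roughly a dozen individual sums of the form $\sum_{(m,k)=1}\alpha^a/(\alpha-1)^b$, which are evaluated one by one (with spurious $\phi(k)^2$ and $J_4$ contributions that only cancel at the very end). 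You instead keep the full Bernoulli combination $S(3,\chi)-\tfrac32 S(2,\chi)+\tfrac12 S(1,\chi)$ and observe that, after substituting the closed forms for $T_1,T_2,T_3$, the $(z-1)^{-1}$ and $(z-1)^{-2}$ poles cancel \emph{pointwise in $n$}, leaving the single term $U(n)=\tfrac{3z(1+z)}{k^2(z-1)^3}=\tfrac{3i}{4k^2}\cot(\tfrac{\pi n}{k})\csc^2(\tfrac{\pi n}{k})$, which is purely imaginary and odd under $n\mapsto k-n$; this is exactly what makes the two surviving orthogonality pieces coincide and collapses everything to $\tfrac{\pi^6}{4k^6}\phi(k)\sum_{(n,k)=1}\cos^2(\tfrac{\pi n}{k})/\sin^6(\tfrac{\pi n}{k})$. (Note that the paper's simplified kernel $k^{-3}T_3-k^{-1}T_1$ differs from your $U(n)$ by the even function $-\tfrac{3z}{k(z-1)^2}=\tfrac{3}{4k}\csc^2(\tfrac{\pi n}{k})$, which is killed only upon summation against an odd character — so the two starting points agree, but only yours enjoys the clean pointwise structure that avoids cross terms.) The one nontrivial input you still need, $\sum_{(m,k)=1}\csc^6(\tfrac{\pi m}{k})=\tfrac{2}{945}J_6(k)+\tfrac{1}{45}J_4(k)+\tfrac{8}{45}J_2(k)$, you propose to derive exactly as the paper does, via $L(6,\chi_0)$. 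What your approach buys is a drastic reduction in bookkeeping and a conceptual explanation for why the answer involves only $J_6$ and $J_2$ (the $J_4$ terms cancel in $\csc^6-\csc^4$); what the paper's term-by-term approach buys is a stock of individual identities (\ref{1/(a-1)^2})--(\ref{a^2/(a-1)^6}) that it reuses wholesale in the $L(4,\chi)$ computation.
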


\begin{theo}\label{L(4,x)}
The formula
\begin{align*}
\sum\limits_{\substack{\chi(\text{mod }  k) \\ \chi \text{ even}}}|L(4, \chi)|^2 = \frac{\pi^8}{27k^8}\phi(k)\left(\frac{J_8(k)}{700}+\frac{J_4(k)}{150}+\frac{2}{21}J_2(k)
\right)
\end{align*} 
holds for all $k\geq3$.
 
\end{theo}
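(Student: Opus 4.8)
The plan is to imitate the derivation of Theorem \ref{L(3,x)}, now specializing the master identity (\ref{1}) to $r=4$; since $r$ is even, this is exactly the parity matched by the even characters. With $2[4/2]=4$, $B_3=0$ and the Bernoulli values listed above, identity (\ref{1}) reads
\begin{align*}
\frac{(-1)^{v+1}3k}{\pi^4}L(4,\chi)=S(4,\chi)-2S(3,\chi)+S(2,\chi)-\tfrac{1}{30}S(0,\chi),
\end{align*}
and the sign $(-1)^{v+1}$ is immaterial since we ultimately take $|\cdot|^2$. A direct check gives $S(0,\chi)=\sum_{j=1}^{k}G(j,\chi)=0$, so only the first three terms survive. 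Writing $S(m,\chi)=\sum_{n=1}^{k}\chi(n)\,k^{-m}\sum_{j=1}^{k}j^{m}e^{2\pi inj/k}$ and substituting the closed forms (\ref{j^4}) for $m=4$, (\ref{j^3}) for $m=3$ and the listed evaluation of $\sum_{s}s^{2}e^{2\pi ims/k}$ for $m=2$, I obtain, after collecting terms,
\begin{align*}
L(4,\chi)=\sum_{\substack{1\le n\le k\\(n,k)=1}}\chi(n)\,F(n),
\end{align*}
where $F(n)$ is an explicit rational function of $\varepsilon:=e^{2\pi in/k}$ with rational coefficients whose only poles are at the powers $(\varepsilon-1)^{-1},\dots,(\varepsilon-1)^{-4}$.

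Next I would form $|L(4,\chi)|^{2}=\sum_{n_{1},n_{2}}\chi(n_{1})\overline{\chi(n_{2})}F(n_{1})\overline{F(n_{2})}$ and sum over even $\chi$. Using $\overline{F(n)}=F(-n)$ (valid because $F$ has real coefficients) together with the even-character orthogonality relation
\begin{align*}
\sum_{\substack{\chi\bmod k\\ \chi\text{ even}}}\chi(a)\overline{\chi(b)}=\frac{\phi(k)}{2}\bigl(\mathbf 1_{a\equiv b}+\mathbf 1_{a\equiv-b}\bigr),\qquad (ab,k)=1,
\end{align*}
the double sum collapses onto $n_{2}\equiv\pm n_{1}$. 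The two surviving contributions combine as $|F(n)|^{2}+F(n)^{2}=2\,(\Re F(n))^{2}$, the odd part cancelling under $n\mapsto k-n$, so that
\begin{align*}
\sum_{\substack{\chi\bmod k\\ \chi\text{ even}}}|L(4,\chi)|^{2}=\phi(k)\sum_{\substack{1\le n\le k\\(n,k)=1}}\bigl(\Re F(n)\bigr)^{2}.
\end{align*}
This single step both removes the factor $\tfrac12$ (restoring the global $\phi(k)$) and annihilates the cotangent terms: expanding $(\varepsilon-1)^{-1}=-\tfrac12\bigl(1+i\cot(\pi n/k)\bigr)$ and its powers, $\Re F(n)$ is a polynomial in $\csc^{2}(\pi n/k)$ alone, whence $(\Re F(n))^{2}$ is a linear combination of $\csc^{2\ell}(\pi n/k)$ for $\ell=1,2,3,4$.

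It then remains to evaluate the four power sums $\sum_{(m,k)=1}\csc^{2\ell}(\pi m/k)$. The cases $\ell=1,2$ are furnished by (\ref{eqn:1bysin2}) and (\ref{eqn:1bysin4}). For $\ell=3,4$ I would derive the corresponding identities exactly as (\ref{eqn:1bysin4}) was obtained above: apply (\ref{1}) to the principal character $\chi_{0}$ (which is even) with $r=6$ and $r=8$, insert $L(2\ell,\chi_{0})=\zeta(2\ell)\prod_{p\mid k}(1-p^{-2\ell})=J_{2\ell}(k)\zeta(2\ell)/k^{2\ell}$, and solve the resulting triangular system for $\sum\csc^{6}$ and $\sum\csc^{8}$ in terms of $J_{6},J_{4},J_{2}$ and $J_{8},J_{6},J_{4},J_{2}$, respectively. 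Substituting all four cosecant evaluations and simplifying the rational coefficients should collapse the $J_{6}$ contributions and reproduce $\tfrac{\pi^{8}}{27k^{8}}\phi(k)\bigl(\tfrac{J_{8}(k)}{700}+\tfrac{J_{4}(k)}{150}+\tfrac{2}{21}J_{2}(k)\bigr)$.

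I expect the \textbf{main obstacle} to be the bookkeeping in the first two steps: organizing the cancellations among the $(\varepsilon-1)^{-p}$ terms when the three sums $S(m,\chi)$ are combined, and then expanding $(\Re F(n))^{2}$ correctly into the four cosecant powers, since the number of terms grows sharply relative to the $r=3$ case. Deriving the $\csc^{6}$ and $\csc^{8}$ identities is mechanical but likewise error-prone; keeping the rational arithmetic exact throughout is precisely what makes the final coefficients $\tfrac{1}{700},\tfrac{1}{150},\tfrac{2}{21}$ emerge cleanly.
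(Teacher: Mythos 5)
Your plan is correct and runs on the same engine as the paper's proof --- Alkan's identity (\ref{1}) with $r=4$, orthogonality of the even characters, and the cosecant power sums $\sum_{(m,k)=1}\csc^{2\ell}(\pi m/k)$ for $\ell\le4$, with the $\ell=3,4$ cases obtained exactly as the paper obtains (\ref{1bysin6}) and its $\csc^8$ analogue. Where you genuinely diverge is in the treatment of the principal character, and your route is the leaner one. The paper first invokes the opposite-parity relation (\ref{3}) to replace $S(3,\chi)$ by $\tfrac32S(2,\chi)$, arriving at $L(4,\chi)=-\tfrac{\pi^4}{3k}\left[S(4,\chi)-2S(2,\chi)\right]$; since that reduction relies on $G(k,\chi)=0$ it fails at $\chi_0$, so the paper must split off the principal character, evaluate $\sum_j j^2R_k(j)$ and $\sum_j j^4R_k(j)$ separately, and this is what necessitates Lemma \ref{phi_4} and the M\"obius/multiplicativity computation occupying the second half of its proof. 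You instead keep the unreduced combination $S(4,\chi)-2S(3,\chi)+S(2,\chi)$, which (because $B_3=0$ and $S(0,\chi)=0$ for every character mod $k\ge2$) represents $L(4,\chi)$ uniformly for all even $\chi$ \emph{including} $\chi_0$; even-character orthogonality then disposes of the whole sum in one stroke and the principal-character correction never arises. The cost is squaring a three-term rather than a two-term expression, but your packaging via $(\Re F(n))^2$ being a polynomial in $\csc^2(\pi n/k)$ absorbs this --- and one can check that the constant terms of the three summands cancel in the combination, so no spurious $\phi(k)^2$ term survives and your list $\ell=1,\dots,4$ is in fact complete. The only caveat is that the decisive rational arithmetic producing $\tfrac1{700}$, $\tfrac1{150}$, $\tfrac2{21}$ is deferred, and, as you note yourself, that bookkeeping is where all the risk of this computation lives.
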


From the above two results, it can be seen that the average values of $|L(3, \chi)|^2$ over all odd characters modulo $k$ is
$\frac{2\pi^6}{90k^6}\left(\frac{J_6(k)}{21}-J_2(k)\right)$ and $|L(4, \chi)|^2$ over all even characters modulo $k$ is
$\frac{2\pi^8}{27k^8}\left(\frac{J_8(k)}{700}+\frac{J_4(k)}{150}+\frac{2}{21}J_2(k)\right)$.

Now we proceed to prove the above two results. The outline of the proof is the following:
\begin{enumerate}
 \item Use identity (\ref{1}) connecting $L(r,\chi)$, Bernoulli numbers, and Gauss sums with $r=3, 4$ and expand it.
 \item Take the sum of $|L(r,\chi)|^2$ over $\chi$ having same parity as that of $r$, further expand it using the complex number property $|z|^2=z\overline{z}$.
 \item The expansion consists of terms of the form $\sum\limits_{j=1}^{k-1} \sum\limits_{s=1}^{k-1}j^p s^q\sum\limits_{\substack{1\leq m\leq k \\ (m, k)=1}}e^{\frac{2\pi im(j\pm s)}{k}}$ for certain values of $p,q$. Simplify these terms further. 
 \item In the simplification of terms obtained in the last step, the exponential power expands to trigonometric identities, necessiating the computation of sums of the trigonometric functions of the form $\sum\limits_{\substack{1\leq m\leq k \\ (m, k)=1}}\frac{1}{\sin^p(\frac{\pi m}{k})}$ where $p$ takes values 2, 4,\ldots. These sums results in certain combinations of the Euler totient function and Jordan totient function which is what we intend to prove.
\end{enumerate}

\begin{proof}[Proof of theorem \ref{L(3,x)}]

 Put $r=3$ in equation (\ref{1}) and use the values of the Bernoulli numbers to get
\begin{align}\label{2}
L(3,\chi)=\frac{-2i\pi^3}{3k}
\left[
S(3, \chi)-\frac{3}{2}S(2,\chi)+\frac{1}{2}S(1, \chi)
\right].
\end{align}
 If $\chi\neq\chi_0$ and $r\geq1$ are of opposite parity, then
 \begin{align}\label{3}
 \sum\limits_{q=0}^{2[\frac{r}{2}]}\binom{r}{q}
  B_q S(r-q.\chi)=0 \text{  \cite[Theorem 1]{alkan2011values} }.
 \end{align}
When $r=2$,  we get 
\begin{align}
S(2, \chi)-S(1, \chi)+\frac{1}{6}S(0, \chi) =0.\label{4}
\end{align} 
 Since $S(0, \chi)=\sum\limits_{j=1}^kG(j,\chi)=0$,  
  \begin{align}\label{5}
 S(2, \chi)=S(1, \chi).
 \end{align}
 From equations (\ref{2}) and (\ref{5}) we have
 \begin{align*}
L(3,\chi)=\frac{-2i\pi^3}{3k}
\left[
S(3, \chi)-S(1, \chi)
\right].
\end{align*}
 Since $G(k, \chi)= \sum\limits_{m=1}^{k}\chi(m)=0$ for any non principal character  \cite[Theroem 6.10]{tom1976introduction}, we have
 \begin{align}\label{6}
  L(3, \chi)= \frac{-2i\pi^3}{3k^2}
\left[
\frac{1}{k^2}\sum\limits_{j=1}^{k-1} j^3G(j, \chi)-\sum\limits_{j=1}^{k-1} jG(j, \chi)
\right].
 \end{align}
 Now we are ready to compute the mean square sum.
 Using the above identity we get
 \begin{align*}
 &\sum\limits_{\substack{\chi(\text{mod }  k) \\ \chi \text{ odd}}}|L(3, \chi)|^2 \\
 =& \frac{4\pi^6}{9k^4}\sum\limits_{\substack{\chi(\text{mod }  k) \\ \chi \text{ odd}}}
 \left|
 \frac{1}{k^2}\sum\limits_{j=1}^{k-1} j^3G(j, \chi)-\sum\limits_{j=1}^{k-1} jG(j, \chi)
 \right|^2\\
 =&\frac{4\pi^6}{9k^4}\sum\limits_{\substack{\chi(\text{mod }  k) \\ \chi \text{ odd}}}
 \left(
 \frac{1}{k^2}\sum\limits_{j=1}^{k-1} j^3G(j, \chi)-\sum\limits_{j=1}^{k-1} jG(j, \chi)
 \right) \times\\
 &\left(
 \frac{1}{k^2}\sum\limits_{s=1}^{k-1} s^3\overline{G(s, \chi)}-\sum\limits_{s=1}^{k-1} s\overline{G(s, \chi)}
 \right)
 \\
  =&\frac{4\pi^6}{9k^8}\sum\limits_{j=1}^{k-1} \sum\limits_{s=1}^{k-1}j^3s^3\sum\limits_{m=1}^{k-1}\sum\limits_{n=1}^{k-1}e^{\frac{2\pi i(mj-ns)}{k}}\sum\limits_{\substack{\chi(\text{mod }  k) \\ \chi \text{ odd}}}\chi(m)\overline{\chi(n)}
 \\
-& \frac{4\pi^6}{9k^6}\sum\limits_{j=1}^{k-1} \sum\limits_{s=1}^{k-1}j^3s\sum\limits_{m=1}^{k-1}\sum\limits_{n=1}^{k-1}e^{\frac{2\pi i(mj-ns)}{k}}\sum\limits_{\substack{\chi(\text{mod }  k) \\ \chi \text{ odd}}}\chi(m)\overline{\chi(n)}\\
-&\frac{4\pi^6}{9k^6}\sum\limits_{j=1}^{k-1} \sum\limits_{s=1}^{k-1}js^3\sum\limits_{m=1}^{k-1}\sum\limits_{n=1}^{k-1}e^{\frac{2\pi i(mj-ns)}{k}}\sum\limits_{\substack{\chi(\text{mod }  k) \\ \chi \text{ odd}}}\chi(m)\overline{\chi(n)}\\
+&\frac{4\pi^6}{9k^4}\sum\limits_{j=1}^{k-1} \sum\limits_{s=1}^{k-1}js\sum\limits_{m=1}^{k-1}\sum\limits_{n=1}^{k-1}e^{\frac{2\pi i(mj-ns)}{k}}\sum\limits_{\substack{\chi(\text{mod }  k) \\ \chi \text{ odd}}}\chi(m)\overline{\chi(n)}.
 \end{align*}
There are exactly $\frac{\phi(k)}{2}$ odd characters modulo $k\geq3$. Therefore $\sum\limits_{\substack{\chi(\text{mod }  k) \\ \chi \text{ odd}}}\chi(u)=\frac{\phi(k)}{2}$ or $-\frac{\phi(k)}{2}$ depending on if $u=1$ or $u=-1$ respectively. If $u$ is neither of these modulo $k$, then  $\sum\limits_{\substack{\chi(\text{mod }  k) \\ \chi \text{ odd}}}\chi(u)=0$. Thus 

 $\sum\limits_{\substack{\chi(\text{mod }  k) \\ \chi \text{ odd}}}\chi(m)\overline{\chi(n)}=\sum\limits_{\substack{\chi(\text{mod }  k) \\ \chi \text{ odd}}}\chi(mn^{-1})=\frac{\phi(k)}{2} \text{ or } -\frac{\phi(k)}{2}$
 
when $m=n$ or $m=k-n$ repectively and  
 $\sum\limits_{\substack{\chi(\text{mod }  k) \\ \chi \text{ odd}}}\chi(m)\overline{\chi(n)}=0 $
 
 otherwise.
 
 Hence we get
 \begin{align}\label{|l(3,x)|}
  &\sum\limits_{\substack{\chi(\text{mod }  k) \\ \chi \text{ odd}}}|L(3, \chi)|^2 \\ =&\frac{4\pi^6}{9k^8}\frac{\phi(k)}{2}
  \left(
  \sum\limits_{j=1}^{k-1} \sum\limits_{s=1}^{k-1}j^3s^3\sum\limits_{\substack{1\leq m\leq k \\ (m, k)=1}}e^{\frac{2\pi im(j-s)}{k}}-\sum\limits_{j=1}^{k-1} \sum\limits_{s=1}^{k-1}j^3s^3\sum\limits_{\substack{1\leq m\leq k \\ (m, k)=1}}e^{\frac{2\pi im(j+s)}{k}}
 \right) \nonumber \\ 
  -&\frac{4\pi^6}{9k^6}\frac{\phi(k)}{2}
  \left(
  \sum\limits_{j=1}^{k-1} \sum\limits_{s=1}^{k-1}j^3s\sum\limits_{\substack{1\leq m\leq k \\ (m, k)=1}}e^{\frac{2\pi im(j-s)}{k}}-\sum\limits_{j=1}^{k-1} \sum\limits_{s=1}^{k-1}j^3s\sum\limits_{\substack{1\leq m\leq k \\ (m, k)=1}}e^{\frac{2\pi im(j+s)}{k}}
  \right)\nonumber \\
  -&\frac{4\pi^6}{9k^6}\frac{\phi(k)}{2}
  \left(
  \sum\limits_{j=1}^{k-1} \sum\limits_{s=1}^{k-1}js^3\sum\limits_{\substack{1\leq m\leq k \\ (m, k)=1}}e^{\frac{2\pi im(j-s)}{k}}-\sum\limits_{j=1}^{k-1} \sum\limits_{s=1}^{k-1}js^3\sum\limits_{\substack{1\leq m\leq k \\ (m, k)=1}}e^{\frac{2\pi im(j+s)}{k}}
  \right)\nonumber \\
  +&\frac{4\pi^6}{9k^4}\frac{\phi(k)}{2}
  \left(
  \sum\limits_{j=1}^{k-1} \sum\limits_{s=1}^{k-1}js\sum\limits_{\substack{1\leq m\leq k \\ (m, k)=1}}e^{\frac{2\pi im(j-s)}{k}}-\sum\limits_{j=1}^{k-1} \sum\limits_{s=1}^{k-1}js\sum\limits_{\substack{1\leq m\leq k \\ (m, k)=1}}e^{\frac{2\pi im(j+s)}{k}}
  \right).\nonumber
 \end{align}
  
  Take the first term in the above expression. If we write $\alpha=e^{\frac{2\pi im}{k}}$ for notational convenience, we have
  \begin{align*}
 &\sum\limits_{j=1}^{k-1} \sum\limits_{s=1}^{k-1}j^3s^3\sum\limits_{\substack{1\leq m\leq k \\ (m, k)=1}}e^{\frac{2\pi im(j-s)}{k}}\\
   =& \sum\limits_{\substack{1\leq m\leq k \\ (m, k)=1}}
 \left(\sum\limits_{j=1}^{k-1} j^3\alpha^j
\right)
\left( 
 \sum\limits_{s=1}^{k-1}s^3\alpha^{-s}
 \right)\\
 =& \sum\limits_{\substack{1\leq m\leq k \\ (m, k)=1}}
 \left(
 \frac{k^3}{\alpha-1}-\frac{3k^2\alpha}{(\alpha-1)^2}+\frac{3k\alpha^2+3k\alpha}{(\alpha-1)^3}
 \right) \times \\
 &\left(
 -\frac{k^3\alpha}{\alpha-1}-\frac{3k^2\alpha}{(\alpha-1)^2}-\frac{3k\alpha^2+3k\alpha}{(\alpha-1)^3}
 \right)\\ &(\text{ using identities }(\ref{j^3}) \text{ and }(\ref{j^-3}))\\
 =& \sum\limits_{\substack{1\leq m\leq k \\ (m, k)=1}}
 \biggl(
 -\frac{k^6\alpha}{(\alpha-1)^2}+\frac{3k^5\alpha^2}{(\alpha-1)^3}-\frac{3k^5\alpha}{(\alpha-1)^3}-\frac{3k^4\alpha^3}{(\alpha-1)^4}\\
 &+\frac{3k^4\alpha^2}{(\alpha-1)^4}-\frac{3k^4\alpha}{(\alpha-1)^4} -\frac{9k^2\alpha^4}{(\alpha-1)^6}-\frac{9k^2\alpha^2}{(\alpha-1)^6}-\frac{18k^2\alpha^3}{(\alpha-1)^6}
\biggr).
  \end{align*}
 By similar computations we get 
  \begin{align*}
&\sum\limits_{j=1}^{k-1} \sum\limits_{s=1}^{k-1}j^3s^3\sum\limits_{\substack{1\leq m\leq k \\ (m, k)=1}}e^{\frac{2\pi im(j+s)}{k}}\\ =& \sum\limits_{\substack{1\leq m\leq k \\ (m, k)=1}}
\biggl (
 \frac{k^6}{(\alpha-1)^2}-\frac{6k^5\alpha}{(\alpha-1)^3}+\frac{15k^4\alpha^2}{(\alpha-1)^4}+\frac{6k^4\alpha}{(\alpha-1)^4}\\
 &-\frac{18k^3\alpha^3}{(\alpha-1)^5}-\frac{18k^3\alpha^2}{(\alpha-1)^5}+\frac{9k^2\alpha^4}{(\alpha-1)^6}+\frac{9k^2\alpha^2}{(\alpha-1)^6}+\frac{18k^2\alpha^3}{(\alpha-1)^6}
 \biggr).
  \end{align*}
  Hence
  \begin{align}\label{l(3,x)j^3s^3}
 &\frac{4\pi^6}{9k^8}\frac{\phi(k)}{2}
  \left(
  \sum\limits_{j=1}^{k-1} \sum\limits_{s=1}^{k-1}j^3s^3\sum\limits_{\substack{1\leq m\leq k \\ (m, k)=1}}e^{\frac{2\pi im(j-s)}{k}}-\sum\limits_{j=1}^{k-1} \sum\limits_{s=1}^{k-1}j^3s^3\sum\limits_{\substack{1\leq m\leq k \\ (m, k)=1}}e^{\frac{2\pi im(j+s)}{k}}
  \right) \\  
  =& \frac{4\pi^6}{9k^8}\frac{\phi(k)}{2}\sum\limits_{\substack{1\leq m\leq k \\ (m, k)=1}}
  \biggl(
-\frac{k^6}{(\alpha-1)^2}-\frac{k^6\alpha}{(\alpha-1)^2}+\frac{3k^5\alpha^2}{(\alpha-1)^3}+\frac{3k^5\alpha}{(\alpha-1)^3}\nonumber\\&-\frac{3k^4\alpha^3}{(\alpha-1)^4}  -\frac{12k^4\alpha^2}{(\alpha-1)^4}-\frac{9k^4\alpha}{(\alpha-1)^4}
 + \frac{18k^3\alpha^3}{(\alpha-1)^5}+\frac{18k^3\alpha^2}{(\alpha-1)^5}-\frac{18k^2\alpha^4}{(\alpha-1)^6}\nonumber\\&-\frac{36k^2\alpha^3}{(\alpha-1)^6}-\frac{18k^2\alpha^2}{(\alpha-1)^6}
 \biggr).\nonumber
  \end{align}
  Now we evaluate each term by term.
  \begin{align*}
 & \sum\limits_{\substack{1\leq m\leq k \\ (m, k)=1}}-\frac{k^6}{(\alpha-1)^2}\\
  &=-k^6\sum\limits_{\substack{1\leq m\leq k \\ (m, k)=1}}\frac{1}{(e^{\frac{2\pi im}{k}}-1)^2}\times\frac{e^{\frac{-2\pi im}{k}}}{e^{\frac{-2\pi im}{k}}}\\
  &=-k^6\sum\limits_{\substack{1\leq m\leq k \\ (m, k)=1}}\frac{e^{\frac{-2\pi im}{k}}}{(e^{\frac{\pi im}{k}}-e^{\frac{-\pi im}{k}})^2}\\
  &=-k^6\sum\limits_{\substack{1\leq m\leq k \\ (m, k)=1}}\frac{\cos(\frac{2\pi m}{k})-i\sin(\frac{2\pi m}{k})}{(2i)^2 \sin^2(\frac{\pi m}{k})}\\
  &=-k^6
  \left[
\frac{-1}{4}  \sum\limits_{\substack{1\leq m\leq k \\ (m, k)=1}}
\left(
\frac{1-2sin^2(\frac{\pi m}{k})}{\sin^2(\frac{\pi m}{k})}
\right)
+\frac{i}{4}\sum\limits_{\substack{1\leq m\leq k \\ (m, k)=1}}
\left(
\frac{2\sin(\frac{\pi m}{k})cos(\frac{\pi m}{k})}{\sin^2(\frac{\pi m}{k})}
\right)
 \right]\\
  &=-k^6
  \left[
  \frac{-1}{4}  \sum\limits_{\substack{1\leq m\leq k \\ (m, k)=1}}\frac{1}{\sin^2(\frac{\pi m}{k})}+\frac{2}{4}\sum\limits_{\substack{1\leq m\leq k \\ (m, k)=1}} 1+\frac{2i}{4}\sum\limits_{\substack{1\leq m\leq k \\ (m, k)=1}} \cot(\frac{\pi m}{k})
  \right].
  \end{align*}
  Use identity (\ref{eqn:1bysin2}) and the fact that
$ \sum\limits_{\substack{1\leq m\leq k \\ (m, k)=1}} \cot(\frac{\pi m}{k})=0$  as $ \cot(\frac{\pi m}{k})$ and $ \cot(\frac{\pi (k-m)}{k})$  cancel each other to get
   \begin{align}\label{1/(a-1)^2}
 \sum\limits_{\substack{1\leq m\leq k \\ (m, k)=1}}-\frac{k^6}{(\alpha-1)^2} 
 = \frac{k^6 }{12}J_2(k)+\frac{-k^6}{2}\phi(k).
    \end{align}
    A similar evaluation gives
  \begin{align}\label{a/(a-1)^2}
  \sum\limits_{\substack{1\leq m\leq k \\ (m, k)=1}}-\frac{k^6\alpha}{(\alpha-1)^2}= 
  \frac{k^6 }{12}J_2(k).
    \end{align}
    Proceeding similarly, we get
     \begin{align*}
  \sum\limits_{\substack{1\leq m\leq k \\ (m, k)=1}}\frac{3k^5\alpha^2}{(\alpha-1)^3}
 =3k^5
  \left[
\frac{-1}{8i}  \sum\limits_{\substack{1\leq m\leq k \\ (m, k)=1}}
\left(
\frac{\cos(\frac{\pi m}{k})}{\sin^3(\frac{\pi m}{k})}
\right)
+\frac{-i}{8i}\sum\limits_{\substack{1\leq m\leq k \\ (m, k)=1}}
\left(
\frac{1}{\sin^2(\frac{\pi m}{k})}
\right)
 \right].
 \end{align*}
 Once again using identity (\ref{eqn:1bysin2}) and
 \begin{align*} 
 \sum\limits_{\substack{1\leq m\leq k \\ (m, k)=1}} \frac{\cos(\frac{\pi m}{k})}{\sin^3(\frac{\pi m}{k})}&= \sum\limits_{\substack{1\leq m\leq k \\ (m, k)=1}} \cot(\frac{\pi m}{k})\csc^2(\frac{\pi m}{k})\\&=\sum\limits_{\substack{1\leq m\leq k \\ (m, k)=1}}\cot(\frac{\pi m}{k}) +\sum\limits_{\substack{1\leq m\leq k \\ (m, k)=1}} \cot^3(\frac{\pi m}{k})
=0,
\end{align*} we get
\begin{align}\label{a^2/(a-1)^3}
   \sum\limits_{\substack{1\leq m\leq k \\ (m, k)=1}}\frac{3k^5\alpha^2}{(\alpha-1)^3} =
 - \frac{k^5 }{8}J_2(k).
    \end{align}
    Similar computations can be performed to get
    \begin{align}\label{a/(a-1)^3}
     \sum\limits_{\substack{1\leq m\leq k \\ (m, k)=1}}\frac{3k^5\alpha}{(\alpha-1)^3}
  =
  \frac{k^5 }{8}J_2(k).
    \end{align}
    Proceeding similarly, we get
    \begin{align}\label{a^3/(a-1)^4}
    & \sum\limits_{\substack{1\leq m\leq k \\ (m, k)=1}}\frac{-3k^4\alpha^3}{(\alpha-1)^4}\\
  &=-3k^4
  \left[
 \frac{1}{16}  \sum\limits_{\substack{1\leq m\leq k \\ (m, k)=1}}
\frac{1}{\sin^4(\frac{\pi m}{k})}-\frac{2}{16}  \sum\limits_{\substack{1\leq m\leq k \\ (m, k)=1}}
\frac{1}{\sin^2(\frac{\pi m}{k})}+\frac{2i}{16}\sum\limits_{\substack{1\leq m\leq k \\ (m, k)=1}}\frac{\cos(\frac{\pi m}{k})}{\sin^3(\frac{\pi m}{k})}
  \right] \nonumber\\
  &=
 - \frac{k^4 }{240}J_4(k)+\frac{k^4}{12}J_2(k) \text{ using identity (\ref{eqn:1bysin4})}. \nonumber
    \end{align}
    It is not very diffcult to evaluate that
    \begin{align}\label{a^2/(a-1)^4}
  \sum\limits_{\substack{1\leq m\leq k \\ (m, k)=1}}\frac{-12k^4\alpha^2}{(\alpha-1)^4}
  =
 - \frac{k^4 }{60}J_4(k)-\frac{k^4}{6}J_2(k).  
    \end{align}
    Also,
    \begin{align}\label{a/(a-1)^4}
  &  \sum\limits_{\substack{1\leq m\leq k \\ (m, k)=1}}\frac{-9k^4\alpha}{(\alpha-1)^4}\\
 &=-9k^4
  \left[
 \frac{1}{16}  \sum\limits_{\substack{1\leq m\leq k \\ (m, k)=1}}
\frac{1}{\sin^4(\frac{\pi m}{k})}-\frac{2}{16}  \sum\limits_{\substack{1\leq m\leq k \\ (m, k)=1}}
\frac{1}{\sin^2(\frac{\pi m}{k})}-\frac{2i}{16}\sum\limits_{\substack{1\leq m\leq k \\ (m, k)=1}}\frac{\cos(\frac{\pi m}{k})}{\sin^3(\frac{\pi m}{k})}
  \right]\nonumber\\
  &=
 - \frac{k^4 }{80}J_4(k)+\frac{k^4}{4}J_2(k)\nonumber.
    \end{align}
    Now
    \begin{align*}
    \sum\limits_{\substack{1\leq m\leq k \\ (m, k)=1}}\frac{18k^3\alpha^3}{(\alpha-1)^5}
  &=18k^3
  \left[
\frac{1}{32i}  \sum\limits_{\substack{1\leq m\leq k \\ (m, k)=1}}
\frac{\cos(\frac{\pi m}{k})}{\sin^5(\frac{\pi m}{k})}
+\frac{1}{32}\sum\limits_{\substack{1\leq m\leq k \\ (m, k)=1}}
\frac{1}{\sin^4(\frac{\pi m}{k})}
 \right].
   \end{align*}
  \begin{align*} 
 \text{ Since }&\sum\limits_{\substack{1\leq m\leq k \\ (m, k)=1}} 
\frac{\cos(\frac{\pi m}{k})}{\sin^5(\frac{\pi m}{k})}
= \sum\limits_{\substack{1\leq m\leq k \\ (m, k)=1}} \cot(\frac{\pi m}{k})\csc^4(\frac{\pi m}{k})\\&=\sum\limits_{\substack{1\leq m\leq k \\ (m, k)=1}}\cot(\frac{\pi m}{k}) +2\sum\limits_{\substack{1\leq m\leq k \\ (m, k)=1}} \cot^3(\frac{\pi m}{k})+\sum\limits_{\substack{1\leq m\leq k \\ (m, k)=1}} \cot^5(\frac{\pi m}{k})
=0,
\end{align*}
we get
 \begin{align}\label{a^3/(a-1)^5}
    \sum\limits_{\substack{1\leq m\leq k \\ (m, k)=1}}\frac{18k^3\alpha^3}{(\alpha-1)^5}
=
  \frac{k^3 }{80}J_4(k)+\frac{k^3}{8}J_2(k)
    \end{align}
    and
    \begin{align}\label{a^2/(a-1)^5}
    \sum\limits_{\substack{1\leq m\leq k \\ (m, k)=1}}\frac{18k^3\alpha^2}{(\alpha-1)^5}
  =
  -\frac{k^3 }{80}J_4(k)-\frac{k^3}{8}J_2(k).   
    \end{align}
    Similar set of computations using the sum $\sum\limits_{\substack{1\leq m\leq k \\ (m, k)=1}}
\frac{1}{\sin^4(\frac{\pi m}{k})}$ yields
    \begin{align}\label{a^4/(a-1)^6}
     \sum\limits_{\substack{1\leq m\leq k \\ (m, k)=1}}\frac{-18k^2\alpha^4}{(\alpha-1)^6}
  =
     \frac{9k^2}{32}  \sum\limits_{\substack{1\leq m\leq k \\ (m, k)=1}}
\frac{1}{\sin^6(\frac{\pi m}{k})}
 - \frac{k^2}{80}
  J_4(k)-\frac{k^2}{8}J_2(k)
    \end{align}
    and
    \begin{align}\label{a^3/(a-1)^6}
      \sum\limits_{\substack{1\leq m\leq k \\ (m, k)=1}}\frac{-36k^2\alpha^3}{(\alpha-1)^6}
  =
 \frac{9k^2}{16}  \sum\limits_{\substack{1\leq m\leq k \\ (m, k)=1}}
\frac{1}{\sin^6(\frac{\pi m}{k})}.
    \end{align}
    Similarly
    \begin{align}\label{a^2/(a-1)^6}
      \sum\limits_{\substack{1\leq m\leq k \\ (m, k)=1}}\frac{-18k^2\alpha^2}{(\alpha-1)^6}
  =  \frac{9k^2}{32}  \sum\limits_{\substack{1\leq m\leq k \\ (m, k)=1}}
\frac{1}{\sin^6(\frac{\pi m}{k})}
 - \frac{k^2}{80}
  J_4(k)-\frac{k^2}{8}J_2(k).
    \end{align}
  Putting back all the expansions into (\ref{l(3,x)j^3s^3}) we get
    \begin{align*}
  &\frac{4\pi^6}{9k^8}\frac{\phi(k)}{2}
  \left(
  \sum\limits_{j=1}^{k-1} \sum\limits_{s=1}^{k-1}j^3s^3\sum\limits_{\substack{1\leq m\leq k \\ (m, k)=1}}e^{\frac{2\pi im(j-s)}{k}}-\sum\limits_{j=1}^{k-1} \sum\limits_{s=1}^{k-1}j^3s^3\sum\limits_{\substack{1\leq m\leq k \\ (m, k)=1}}e^{\frac{2\pi im(j+s)}{k}}
  \right) \\  
=&\frac{4\pi^6}{9k^8}\frac{\phi(k)}{2}
\biggl[
-\frac{k^6}{2}\phi(k)+\frac{k^6 }{6}J_2(k)- \frac{k^4 }{30}J_4(k)+\frac{k^4}{6}J_2(k)\\&- \frac{k^2}{40}J_4(k)-\frac{k^2}{4}J_2(k)+ \frac{9k^2}{8}  \sum\limits_{\substack{1\leq m\leq k \\ (m, k)=1}}
\frac{1}{\sin^6(\frac{\pi m}{k})}
\biggr]. 
\end{align*}
To simplify the above further, we have to evaluate the sum $\sum\limits_{\substack{1\leq m\leq k \\ (m, k)=1}}
\frac{1}{\sin^6(\frac{\pi m}{k})}$.

We use equation (\ref{1}) with principal character $\chi=\chi_0$ modulo $k\geq3$ and $r=6$ to arrive at the following formula
\begin{align*}
&\frac{k^6}{\pi^6}L(6, \chi_0)\\
=&-\frac{14}{15}\sum\limits_{\substack{1\leq m\leq k \\ (m, k)=1}}
\frac{1}{\sin^2(\frac{\pi m}{k})}-\frac{5}{2}\sum\limits_{\substack{1\leq m\leq k \\ (m, k)=1}}
\frac{\cos(\frac{2\pi m}{k})}{\sin^4(\frac{\pi m}{k})}+2\sum\limits_{\substack{1\leq m\leq k \\ (m, k)=1}}
\frac{\sin(\frac{3\pi m}{k})}{\sin^5(\frac{\pi m}{k})}\\&+\frac{1}{2}\sum\limits_{\substack{1\leq m\leq k \\ (m, k)=1}}
\frac{\cos(\frac{4\pi m}{k})}{\sin^6(\frac{\pi m}{k})}.
\end{align*}
  Now using elementary trigonometric identities on $\sin3\theta$ and $\cos2\theta$, we get
\begin{align*}
\frac{k^6}{\pi^6}L(6, \chi_0)
=\frac{1}{15}\sum\limits_{\substack{1\leq m\leq k \\ (m, k)=1}}
\frac{1}{\sin^2(\frac{\pi m}{k})}-\frac{1}{2}\sum\limits_{\substack{1\leq m\leq k \\ (m, k)=1}}
\frac{1}{\sin^4(\frac{\pi m}{k})}+\frac{1}{2}\sum\limits_{\substack{1\leq m\leq k \\ (m, k)=1}}
\frac{1}{\sin^6(\frac{\pi m}{k})}.
\end{align*}
Using $ \frac{k^6}{\pi^6}L(6, \chi_0)=\frac{k^6}{\pi^6}\zeta(6)\prod\limits_{\substack{p|n\\p\text{ prime}}}\left(1-\frac{1}{p^6}\right)=\frac{J_6(k)}{945}$ \cite[Theorem 11.7]{tom1976introduction},
we conclude that
\begin{align}\label{1bysin6}
\sum\limits_{\substack{1\leq m\leq k \\ (m, k)=1}}
\frac{1}{\sin^6(\frac{\pi m}{k})}=\frac{2}{945}J_6(k)+\frac{1}{45}J_4(k)+\frac{8}{45}J_2(k).
\end{align}
Therefore
\begin{align}\label{j^3s^3L(3,x)}
 &\frac{4\pi^6}{9k^8}\frac{\phi(k)}{2}
  \left(
  \sum\limits_{j=1}^{k-1} \sum\limits_{s=1}^{k-1}j^3s^3\sum\limits_{\substack{1\leq m\leq k \\ (m, k)=1}}e^{\frac{2\pi im(j-s)}{k}}-\sum\limits_{j=1}^{k-1} \sum\limits_{s=1}^{k-1}j^3s^3\sum\limits_{\substack{1\leq m\leq k \\ (m, k)=1}}e^{\frac{2\pi im(j+s)}{k}}
  \right)\nonumber \\
  &=\frac{4\pi^6}{9k^6}\frac{\phi(k)}{2}
\left(
-\frac{k^4}{2}\phi(k)+\frac{k^4 }{6}J_2(k)- \frac{k^2 }{30}J_4(k)+\frac{k^2}{6}J_2(k)-\frac{1}{20}J_2(k)+
\frac{1}{420}J_6(k)  
\right).
    \end{align}
   Now consider the second term on the RHS of (\ref{|l(3,x)|}) and simplify the first sum in bracket to get
   \begin{align*}
  \sum\limits_{j=1}^{k-1} \sum\limits_{s=1}^{k-1}j^3s\sum\limits_{\substack{1\leq m\leq k \\ (m, k)=1}}e^{\frac{2\pi im(j-s)}{k}}= \sum\limits_{\substack{1\leq m\leq k \\ (m, k)=1}}
 \left(
 \sum\limits_{j=1}^{k-1} j^3e^{\frac{2\pi imj}{k}}
\right)
\left(
 \sum\limits_{s=1}^{k-1}se^{\frac{-2\pi ims}{k}}
 \right)
 \end{align*} which on further simplification using identity (\ref{j^3}) gives
  \begin{align*}
&  \sum\limits_{j=1}^{k-1} \sum\limits_{s=1}^{k-1}j^3s\sum\limits_{\substack{1\leq m\leq k \\ (m, k)=1}}e^{\frac{2\pi im(j-s)}{k}}
 \\=& \sum\limits_{\substack{1\leq m\leq k \\ (m, k)=1}}
 \biggl(
-\frac{k^4}{\alpha-1}-\frac{k^4}{(\alpha-1)^2}+\frac{3k^3\alpha}{(\alpha-1)^2}+\frac{3k^3\alpha}{(\alpha-1)^3}-\frac{3k^2\alpha^2}{(\alpha-1)^3}\\&-\frac{3k^2\alpha}{(\alpha-1)^3}-\frac{3k^2\alpha^2}{(\alpha-1)^4}-\frac{3k^2\alpha}{(\alpha-1)^4}
\biggr).
  \end{align*}
 Similar computations give
  \begin{align*}
 & \sum\limits_{j=1}^{k-1} \sum\limits_{s=1}^{k-1}j^3s\sum\limits_{\substack{1\leq m\leq k \\ (m, k)=1}}e^{\frac{2\pi im(j+s)}{k}}\\&= \sum\limits_{\substack{1\leq m\leq k \\ (m, k)=1}}
 \left(
 \frac{k^4}{(\alpha-1)^2}-\frac{3k^3\alpha}{(\alpha-1)^3}+\frac{(3k^2\alpha^2}{(\alpha-1)^4}+\frac{3k^2\alpha}{(\alpha-1)^4}
 \right).
  \end{align*}
  Thus the second term on the RHS of (\ref{|l(3,x)|}) becomes
  \begin{align*}
  &\frac{4\pi^6}{9k^6}\frac{\phi(k)}{2}
  \left(
  \sum\limits_{j=1}^{k-1} \sum\limits_{s=1}^{k-1}j^3s\sum\limits_{\substack{1\leq m\leq k \\ (m, k)=1}}e^{\frac{2\pi im(j-s)}{k}}-\sum\limits_{j=1}^{k-1} \sum\limits_{s=1}^{k-1}j^3s\sum\limits_{\substack{1\leq m\leq k \\ (m, k)=1}}e^{\frac{2\pi im(j+s)}{k}}
  \right) \\
 =&\frac{4\pi^6}{9k^6}\frac{\phi(k)}{2}
  \sum\limits_{\substack{1\leq m\leq k \\ (m, k)=1}}
  \biggl(
  -\frac{k^4}{\alpha-1}-\frac{2k^4}{(\alpha-1)^2}+\frac{3k^3\alpha}{(\alpha-1)^2}+\frac{(6k^3-3k^2)\alpha}{(\alpha-1)^3}\\&-\frac{3k^2\alpha^2}{(\alpha-1)^3}-\frac{6k^2\alpha}{(\alpha-1)^4}
  -\frac{6k^2\alpha^2}{(\alpha-1)^4}
  \biggr).
  \end{align*}

 From identities (\ref{1/(a-1)^2}), (\ref{a/(a-1)^2}), (\ref{a/(a-1)^3}), (\ref{a^2/(a-1)^3}), (\ref{a/(a-1)^4}), (\ref{a^2/(a-1)^4}) and
\begin{align}\label{1/a-1}
\sum\limits_{\substack{1\leq m\leq k \\ (m, k)=1}}\frac{-k^4}{\alpha-1}&=-k^4\sum\limits_{\substack{1\leq m\leq k \\ (m, k)=1}}\frac{1}{\alpha-1}\\&=-k^4\left[
 \frac{1}{2i} \sum\limits_{\substack{1\leq m\leq k \\ (m, k)=1}}\frac{\cos(\frac{\pi m}{k})}{\sin(\frac{\pi m}{k})}-\frac{1}{2} \sum\limits_{\substack{1\leq m\leq k \\ (m, k)=1}}1
  \right]=\frac{k^4}{2}\phi(k)\nonumber
\end{align}

 we get
\begin{align}\label{j^3sL(3,x)}
&\frac{4\pi^6}{9k^6}\frac{\phi(k)}{2}
  \left(
  \sum\limits_{j=1}^{k-1} \sum\limits_{s=1}^{k-1}j^3s\sum\limits_{\substack{1\leq m\leq k \\ (m, k)=1}}e^{\frac{2\pi im(j-s)}{k}}-\sum\limits_{j=1}^{k-1} \sum\limits_{s=1}^{k-1}j^3s\sum\limits_{\substack{1\leq m\leq k \\ (m, k)=1}}e^{\frac{2\pi im(j+s)}{k}}
  \right)\nonumber \\
   &=\frac{4\pi^6}{9k^6}\frac{\phi(k)}{2}
  \left(
-\frac{k^4}{2}\phi(k)+\frac{k^4}{6}J_2(k)-\frac{k^2}{60}J_4(k)+\frac{k^2}{12}J_2(k)
  \right).
\end{align}
Now we consider the third term on the RHS of (\ref{|l(3,x)|}) and
  interchange the variables $j$ and $s$ to get
  \begin{align}\label{js^3L(3,x)}
  &\frac{4\pi^6}{9k^6}\frac{\phi(k)}{2}
  \left(
  \sum\limits_{j=1}^{k-1} \sum\limits_{s=1}^{k-1}j^3s\sum\limits_{\substack{1\leq m\leq k \\ (m, k)=1}}e^{\frac{2\pi i(k-m)(s-j)}{k}}-\sum\limits_{j=1}^{k-1} \sum\limits_{s=1}^{k-1}j^3s\sum\limits_{\substack{1\leq m\leq k \\ (m, k)=1}}e^{\frac{2\pi im(j+s)}{k}}
  \right) \nonumber\\
 &= \frac{4\pi^6}{9k^6}\frac{\phi(k)}{2}
  \left(
  \sum\limits_{j=1}^{k-1} \sum\limits_{s=1}^{k-1}j^3s\sum\limits_{\substack{1\leq m\leq k \\ (m, k)=1}}e^{\frac{2\pi im(j-s)}{k}}-\sum\limits_{j=1}^{k-1} \sum\limits_{s=1}^{k-1}j^3s\sum\limits_{\substack{1\leq m\leq k \\ (m, k)=1}}e^{\frac{2\pi im(j+s)}{k}}
  \right) \nonumber \\
  &=\frac{4\pi^6}{9k^6}\frac{\phi(k)}{2}
  \left(
-\frac{k^4}{2}\phi(k)+\frac{k^4}{6}J_2(k)-\frac{k^2}{60}J_4(k)+\frac{k^2}{12}J_2(k)
  \right).
  \end{align}
  Now consider the last term on the RHS of (\ref{|l(3,x)|}). 
  First get
  \begin{align*}
  &\sum\limits_{j=1}^{k-1} \sum\limits_{s=1}^{k-1}js\sum\limits_{\substack{1\leq m\leq k \\ (m, k)=1}}e^{\frac{2\pi im(j-s)}{k}}\\
  &=\sum\limits_{\substack{1\leq m\leq k \\ (m, k)=1}}
 \left(
 \sum\limits_{j=1}^{k-1} je^{\frac{2\pi imj}{k}}
\right)
\left( 
 \sum\limits_{s=1}^{k-1}se^{\frac{-2\pi ims}{k}}
 \right)\\
  &=\sum\limits_{\substack{1\leq m\leq k \\ (m, k)=1}}
 \left(
 \frac{k}{\alpha-1}
 \right)
 \left(
 -k-\frac{k}{\alpha-1}
 \right)\\
  &=\sum\limits_{\substack{1\leq m\leq k \\ (m, k)=1}}
 \left(
- \frac{k^2}{\alpha-1}-\frac{k^2}{(\alpha-1)^2}
 \right)
 \end{align*}
 and then
 \begin{align*}
 \sum\limits_{j=1}^{k-1} \sum\limits_{s=1}^{k-1}js\sum\limits_{\substack{1\leq m\leq k \\ (m, k)=1}}e^{\frac{2\pi im(j+s)}{k}}
=\sum\limits_{\substack{1\leq m\leq k \\ (m, k)=1}}
 \left(
 \frac{k^2}{(\alpha-1)^2}
 \right).
  \end{align*}
  So
  \begin{align}\label{jsL(3,x)}
  &\frac{4\pi^6}{9k^4}\frac{\phi(k)}{2}
  \left(
  \sum\limits_{j=1}^{k-1} \sum\limits_{s=1}^{k-1}js\sum\limits_{\substack{1\leq m\leq k \\ (m, k)=1}}e^{\frac{2\pi im(j-s)}{k}}-\sum\limits_{j=1}^{k-1} \sum\limits_{s=1}^{k-1}js\sum\limits_{\substack{1\leq m\leq k \\ (m, k)=1}}e^{\frac{2\pi im(j+s)}{k}}
  \right)\nonumber \\
&=  \frac{4\pi^6}{9k^4}\frac{\phi(k)}{2}
\sum\limits_{\substack{1\leq m\leq k \\ (m, k)=1}}
\left(
- \frac{k^2}{\alpha-1}-\frac{2k^2}{(\alpha-1)^2}
\right)\nonumber \\
 &=\frac{4\pi^6}{9k^6}\frac{\phi(k)}{2}
  \left(
 -\frac{k^4}{2}\phi(k)+ \frac{k^4}{6}J_2(k)
  \right) \text{ using identities } (\ref{1/(a-1)^2}), (\ref{1/a-1}).
\end{align}
From equations (\ref{j^3s^3L(3,x)}), (\ref{j^3sL(3,x)}), (\ref{js^3L(3,x)}), (\ref{jsL(3,x)}) we get
\begin{align*}
 &\sum\limits_{\substack{\chi(\text{mod }  k) \\ \chi \text{ odd}}}|L(3, \chi)|^2 \\
 &=\frac{\pi^6}{90k^6}\phi(k)
 \left(
\frac{1}{21}J_6(k) -J_2(k)
 \right)
\end{align*} which is what we required.
So the average value of $ \sum\limits_{\substack{\chi(\text{mod }  k) \\ \chi \text{ odd}}}|L(3, \chi)|^2$ over all odd characters modulo $k$ is $
 \frac{\pi^6}{45k^6}\left(
\frac{1}{21}J_6(k) -J_2(k)\right).
$
\end{proof}
To compute the sum in theorem \ref{L(4,x)}, we need the following identity.
\begin{lemm}\label{phi_4}
\begin{align*}
\phi_4(n)=\frac{n^4}{5}\phi(n)+\frac{n^3}{3}\prod\limits_{p|n}(1-p)-\frac{n}{30}\prod\limits_{p|n}(1-p^3).
\end{align*}
\end{lemm}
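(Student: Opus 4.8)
The plan is to compute $\phi_4(n)=\sum_{\substack{1\le j\le n\\(j,n)=1}}j^4$ directly, by using the Möbius function to remove the coprimality constraint, applying Faulhaber's power-sum formula, and then recognizing each resulting divisor sum as a product over the primes dividing $n$.

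First I would write the indicator of $(j,n)=1$ as $\sum_{d\mid(j,n)}\mu(d)$, interchange the order of summation, and substitute $j=d\ell$. This gives
\[
\phi_4(n)=\sum_{d\mid n}\mu(d)\,d^4\sum_{\ell=1}^{n/d}\ell^4 .
\]
Next I would insert Faulhaber's formula for the inner sum,
\[
\sum_{\ell=1}^{m}\ell^4=\frac{m^5}{5}+\frac{m^4}{2}+\frac{m^3}{3}-\frac{m}{30},
\]
which is the $k=4$ specialization of the Bernoulli-number expansion already used in this paper, with $B_0=1$, $B_1=-\tfrac12$, $B_2=\tfrac16$, $B_3=0$, $B_4=-\tfrac{1}{30}$. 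Taking $m=n/d$, distributing $\mu(d)d^4$ over the four monomials, and cancelling the appropriate powers of $d$ collapses the expression into four elementary divisor sums:
\[
\frac{n^5}{5}\sum_{d\mid n}\frac{\mu(d)}{d}+\frac{n^4}{2}\sum_{d\mid n}\mu(d)+\frac{n^3}{3}\sum_{d\mid n}\mu(d)\,d-\frac{n}{30}\sum_{d\mid n}\mu(d)\,d^3 .
\]

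Finally, each of these sums is multiplicative in $d$, so I would evaluate it as a product over the prime divisors of $n$: namely $\sum_{d\mid n}\mu(d)/d=\prod_{p\mid n}(1-1/p)=\phi(n)/n$, which turns the first term into $\tfrac{n^4}{5}\phi(n)$; while $\sum_{d\mid n}\mu(d)\,d=\prod_{p\mid n}(1-p)$ and $\sum_{d\mid n}\mu(d)\,d^3=\prod_{p\mid n}(1-p^3)$ produce the remaining two displayed terms. The only point that needs a word of care, and the nearest thing to an obstacle, is the second sum $\sum_{d\mid n}\mu(d)$: it equals $0$ for $n>1$, which is precisely why no $n^4$-term appears in the stated formula. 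This also shows the identity is to be read for $n\ge 2$, the case $n=1$ (where $\phi_4(1)=1$) being trivial and handled separately.
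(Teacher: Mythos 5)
Your proposal is correct and follows essentially the same route as the paper's: both reduce $\phi_4(n)$ via the M\"obius function to the four divisor sums $\sum_{d\mid n}\mu(d)d^{j}$ ($j=-1,0,1,3$) produced by Faulhaber's formula and then evaluate each as a product over the primes dividing $n$; the paper merely arrives at these sums through the identity $\sum_{d\mid n}\phi_4(d)/d^4=(1^4+\cdots+n^4)/n^4$ and M\"obius inversion instead of inserting the coprimality indicator $\sum_{d\mid(j,n)}\mu(d)$ directly. Your explicit remark that the vanishing of $\sum_{d\mid n}\mu(d)$ requires $n\geq 2$ is a point the paper leaves implicit (and is harmless there, since the lemma is only applied with $n=k/d>1$).
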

\begin{proof}
It is not very difficult to show that
\begin{align*}
\sum\limits_{d|n}\frac{\phi_k(d)}{d^k}=\frac{1^k+\cdots+n^k}{n^k} \text{    \cite[Chapter 2,exercise 15]{tom1976introduction} }.
\end{align*}

If we write $f(n)=\sum\limits_{j=1}^n \frac{j^4}{n^4}$ and $g(n)=\frac{\phi_4(n)}{n^4}$
 then  $\sum\limits_{d|n}g(d)=f(n)$.

By Mobius inversion we get
\begin{align*}
\frac{\phi_4(n)}{n^4}=&\sum\limits_{d|n}\left(\sum\limits_{j=1}^d \frac{j^4}{d^4}\right)\mu\left(\frac{n}{d}\right)\\
=&\sum\limits_{d|n}\frac{d(d+1)(2d+1)(3d^2+3d-1)}{30}\frac{\mu(\frac{n}{d})}{d^4}\\
=&\frac{1}{5}\sum\limits_{d|n}d\mu\left(\frac{n}{d}\right)+\frac{1}{2}\sum\limits_{d|n}\mu\left(\frac{n}{d}\right)+\frac{1}{3}\sum\limits_{d|n}\frac{\mu(\frac{n}{d})}{d}-\frac{1}{30}\sum\limits_{d|n}\frac{\mu(\frac{n}{d})}{d^3}\\
=&\frac{1}{5}\phi(n)+\frac{1}{3}\sum\limits_{d|n}\frac{\mu(d)}{\frac{n}{d}}-\frac{1}{30}\sum\limits_{d|n}\frac{\mu(d)}{(\frac{n}{d})^3}\\&\text{ since }
 \sum\limits_{d|n}d\mu\left(\frac{n}{d}\right)=\phi(n) \text{ and }\sum\limits_{d|n}\mu\left(\frac{n}{d}\right)=0
\\
=&\frac{1}{5}\phi(n)+\frac{1}{3n}\sum\limits_{d|n}d\mu(d)-\frac{1}{30n^3}\sum\limits_{d|n}d^3\mu(d).
\end{align*}
Therefore
\begin{align*} \phi_4(n)=\frac{n^4}{5}\phi(n)+\frac{n^3}{3}\sum\limits_{d|n}d\mu(d)-\frac{n}{30}\sum\limits_{d|n}d^3\mu(d).
\end{align*}
Let $n=p_1^{a_1}p_2^{a_2}\cdots p_r^{a_r}$ be the prime decomposition of $n$.
\begin{align*}
\sum\limits_{d|n}d\mu(d)&=\mu(1)+p_1\mu(p_1)+\cdots +p_r\mu(p_r)+p_1p_2\mu(p_1p_2)\\&+\cdots +p_1p_2\cdots p_r\mu(p_1p_2\cdots p_r)\\
&=1-\sum p_i+\sum p_ip_j-\cdots +(-1)^rp_1\cdots p_r\\
&=\prod\limits_{p|n}(1-p).
\end{align*}
Similarly
\begin{align*}
\sum\limits_{d|n}d^3\mu(d)
&=1-\sum p_i^3+\sum p_i^3p_j^3-\cdots +(-1)^r(p_1\cdots p_r)^3\\
&=\prod\limits_{p|n}(1-p^3).
\end{align*}
Hence, $\phi_4(n)=\frac{n^4}{5}\phi(n)+\frac{n^3}{3}\prod\limits_{p|n}(1-p)-\frac{n}{30}\prod\limits_{p|n}(1-p^3)$.
\end{proof}
Now we proceed to prove theorem \ref{L(4,x)}.
\begin{proof}[Proof of theorem \ref{L(4,x)}]
First note that
\begin{align*}
 \sum\limits_{\substack{\chi(\text{mod }  k) \\ \chi \text{ even}}}|L(4, \chi)|^2=|L(4, \chi_0)|^2+\sum\limits_{\substack{\chi(\text{mod }  k) \\ \chi \text{ even}\\ \chi \neq \chi_0}}|L(4, \chi)|^2.
\end{align*}
We have $L(4, \chi_0)=\zeta(4)\prod\limits_{\substack{p|n\\p\text{ prime}}}\left(1-\frac{1}{p^4}\right)=\frac{\pi^4J_4(k)}{90k^4}$  \cite[Theorem 11.7]{tom1976introduction}.
Therefore 
\begin{align}\label{1*}
\sum\limits_{\substack{\chi(\text{mod }  k) \\ \chi \text{ even}}}|L(4, \chi)|^2=\frac{\pi^8J_4^2(k)}{8100k^8}+\sum\limits_{\substack{\chi(\text{mod }  k) \\ \chi \text{ even}\\ \chi \neq \chi_0}}|L(4, \chi)|^2.
\end{align}
Put $r=4$ in equation (\ref{1}), we have
\begin{align}\label{2*}
L(4, \chi)= \frac{-\pi^4}{3k}	\biggl[&
B_0S(4, \chi)+4B_1S(3, \chi)+6B_2S(2, \chi)+4B_3S(1, \chi)\\&+B_4S(0, \chi)
\biggr]\nonumber.
\end{align}
Put $r=1$ in equation (\ref{3}), we get 
 \begin{align}\label{4*}
 S(1, \chi)= \sum\limits_{j=1}^k \left(\frac{j}{k}\right)G(j, \chi)=\sum\limits_{j=1}^{k-1} \left(\frac{j}{k}\right)G(j, \chi)=0
 \end{align}
 since $r=1$ and $\chi$ are of opposite parity and $G(k, \chi)=0$	 when $\chi$ is non-principal modulo $k\geq3$.
 
 Also put $r=3$ in equation (\ref{3}), we get 
 \begin{align*}
 0=\sum\limits_{q=0}^{2}\binom{3}{q}
 B_q S(3-q,\chi)=
  B_0S(3, \chi)+3B_1S(2, \chi)+3B_2S(1, \chi).
 \end{align*}
 Therefore 
\begin{align}\label{5*}
 S(3, \chi)=\frac{3}{2}S(2, \chi).
\end{align}
Equations (\ref{2*}), (\ref{4*}), (\ref{5*}) and the fact that $S(0, \chi)=0$	, when $\chi \neq \chi_0$ we get
\begin{align*}
L(4, \chi)&= \frac{-\pi^4}{3k}	\left[ S(4, \chi)-2S(2, \chi)
\right]\\
&=\frac{-\pi^4}{3k^3}	\left[\frac{1}{k^2}\sum\limits_{j=1}^{k-1} j^4G(j, \chi)-2\sum\limits_{j=1}^{k-1} j^2G(j, \chi)
\right].
\end{align*}
\begin{align*}
\sum\limits_{\substack{\chi(\text{mod }  k) \\ \chi \text{ even}\\ \chi \neq \chi_0}}|L(4, \chi)|^2=\frac{\pi^8}{9k^6}\sum\limits_{\substack{\chi(\text{mod }  k) \\ \chi \text{ even}\\ \chi \neq \chi_0}}\left|
\frac{1}{k^2}\sum\limits_{j=1}^{k-1} j^4G(j, \chi)-2\sum\limits_{j=1}^{k-1} j^2G(j, \chi)
\right|^2.
\end{align*}
\begin{align}\label{6*}
&\sum\limits_{\substack{\chi(\text{mod }  k) \\ \chi \text{ even}\\ \chi \neq \chi_0}}|L(4, \chi)|^2\\ \nonumber =&\frac{\pi^8}{9k^6}\sum\limits_{\substack{\chi(\text{mod }  k) \\ \chi \text{ even}}}\left|
\frac{1}{k^2}\sum\limits_{j=1}^{k-1} j^4G(j, \chi)-2\sum\limits_{j=1}^{k-1} j^2G(j, \chi)
\right|^2\\ \nonumber &-\frac{\pi^8}{9k^6}\left|
\frac{1}{k^2}\sum\limits_{j=1}^{k-1} j^4G(j, \chi_0)-2\sum\limits_{j=1}^{k-1} j^2G(j, \chi_0)
\right|^2 \nonumber.
\end{align}
Consider the first term in the above diference.
\begin{align*}
&\frac{\pi^8}{9k^6}\sum\limits_{\substack{\chi(\text{mod }  k) \\ \chi \text{ even}}}\left|\frac{1}{k^2}\sum\limits_{j=1}^{k-1} j^4G(j, \chi)-2\sum\limits_{j=1}^{k-1} j^2G(j, \chi)
\right|^2\\
=&\frac{\pi^8}{9k^6}\sum\limits_{\substack{\chi(\text{mod }  k) \\ \chi \text{ even}}}\left[\frac{1}{k^2}\sum\limits_{j=1}^{k-1} j^4G(j, \chi)-2\sum\limits_{j=1}^{k-1} j^2G(j, \chi)
\right] \times \\&\left[\frac{1}{k^2}\sum\limits_{s=1}^{k-1} s^4\overline{G(s, \chi)}-2\sum\limits_{s=1}^{k-1} s^2\overline{G(s, \chi)}					
\right]\\
=&\frac{\pi^8}{9k^6}\sum\limits_{\substack{\chi(\text{mod }  k) \\ \chi \text{ even}}}\biggl[
\frac{1}{k^4}\sum\limits_{j=1}^{k-1}\sum\limits_{s=1}^{k-1}j^4	s^4G(j, \chi)\overline{G(s, \chi)}-\frac{2}{k^2}\sum\limits_{j=1}^{k-1}\sum\limits_{s=1}^{k-1}j^4	s^2G(j, \chi)\overline{G(s, \chi)}\\&-\frac{2}{k^2}\sum\limits_{j=1}^{k-1}\sum\limits_{s=1}^{k-1}j^2	s^4G(j, \chi)\overline{G(s, \chi)}+4\sum\limits_{j=1}^{k-1}\sum\limits_{s=1}^{k-1}j^2	s^2G(j, \chi)\overline{G(s, \chi)}
\biggr].
\end{align*}
For convenience write $S=\left(
 \sum\limits_{m=1}^{k-1}\chi(m)e^{\frac{2\pi imj}{k}}
 \right)
 \left(
 \sum\limits_{n=1}^{k-1}\overline{\chi(n)}e^{\frac{-2\pi ins}{k}}
 \right)$.
 
 Thus we can rewrite the above equation as
\begin{align}\label{|L(4,x)|}
&\frac{\pi^8}{9k^6}\sum\limits_{\substack{\chi(\text{mod }  k) \\ \chi \text{ even}}}\left|\frac{1}{k^2}\sum\limits_{j=1}^{k-1} j^4G(j, \chi)-2\sum\limits_{j=1}^{k-1} j^2G(j, \chi)
\right|^2\\
=&\frac{\pi^8}{9k^{10}}\sum\limits_{\substack{\chi(\text{mod }  k) \\ \chi \text{ even}}}\sum\limits_{j=1}^{k-1}\sum\limits_{s=1}^{k-1}j^4	s^4S-\frac{2\pi^8}{9k^{8}}\sum\limits_{\substack{\chi(\text{mod }  k) \\ \chi \text{ even}}}\sum\limits_{j=1}^{k-1}\sum\limits_{s=1}^{k-1}j^4	s^2S\nonumber \\
 &-\frac{2\pi^8}{9k^8}\sum\limits_{\substack{\chi(\text{mod }  k) \\ \chi \text{ even}}}\sum\limits_{j=1}^{k-1}\sum\limits_{s=1}^{k-1}j^2	s^4S-\frac{4\pi^8}{9k^{6}}\sum\limits_{\substack{\chi(\text{mod }  k) \\ \chi \text{ even}}}\sum\limits_{j=1}^{k-1}\sum\limits_{s=1}^{k-1}j^2	s^2S \nonumber.
 \end{align}
 Firstly we simplify the term
 \begin{align*}
& \frac{\pi^8}{9k^{10}}\sum\limits_{\substack{\chi(\text{mod }  k) \\ \chi \text{ even}}}\sum\limits_{j=1}^{k-1}\sum\limits_{s=1}^{k-1}j^4	s^4S\\
 &=\frac{\pi^8}{9k^{10}}\sum\limits_{j=1}^{k-1} \sum\limits_{s=1}^{k-1}j^4s^4\sum\limits_{m=1}^{k-1}\sum\limits_{n=1}^{k-1}e^{\frac{2\pi i(mj-ns)}{k}}\sum\limits_{\substack{\chi(\text{mod }  k) \\ \chi \text{ even}}}\chi(mn^{-1})\\
 &=\frac{\pi^8}{18k^{10}}\phi(k)
  \left(
  \sum\limits_{j=1}^{k-1} \sum\limits_{s=1}^{k-1}j^4s^4\sum\limits_{\substack{1\leq m\leq k \\ (m, k)=1}}e^{\frac{2\pi im(j-s)}{k}}+\sum\limits_{j=1}^{k-1} \sum\limits_{s=1}^{k-1}j^4s^4\sum\limits_{\substack{1\leq m\leq k \\ (m, k)=1}}e^{\frac{2\pi im(j+s)}{k}}
  \right)
  \end{align*}
where we used the fact that if $u\equiv \pm1$(mod $k$) then $\sum\limits_{\substack{\chi(\text{mod }  k) \\ \chi \text{ even}}}\chi(u)=\frac{\phi(k)}{2}$ for $k \geq 3$ and otherwise $\sum\limits_{\substack{\chi(\text{mod }  k) \\ \chi \text{ even}}}\chi(u)=0$.

By similar computations we arrive at
 \begin{align}\label{L(4,x)1}
 &\frac{\pi^8}{9k^6}\sum\limits_{\substack{\chi(\text{mod }  k) \\ \chi \text{ even}}}\left|\frac{1}{k^2}\sum\limits_{j=1}^{k-1} j^4G(j, \chi)-2\sum\limits_{j=1}^{k-1} j^2G(j, \chi)
\right|^2\\ \nonumber
=&\frac{\pi^8}{18k^{10}}\phi(k)
  \left(
  \sum\limits_{j=1}^{k-1} \sum\limits_{s=1}^{k-1}j^4s^4\sum\limits_{\substack{1\leq m\leq k \\ (m, k)=1}}e^{\frac{2\pi im(j-s)}{k}}+\sum\limits_{j=1}^{k-1} \sum\limits_{s=1}^{k-1}j^4s^4\sum\limits_{\substack{1\leq m\leq k \\ (m, k)=1}}e^{\frac{2\pi im(j+s)}{k}}
  \right)\\ \nonumber
  &-\frac{\pi^8}{9k^{8}}\phi(k)
  \left(
  \sum\limits_{j=1}^{k-1} \sum\limits_{s=1}^{k-1}j^4s^2\sum\limits_{\substack{1\leq m\leq k \\ (m, k)=1}}e^{\frac{2\pi im(j-s)}{k}}+\sum\limits_{j=1}^{k-1} \sum\limits_{s=1}^{k-1}j^4s^2\sum\limits_{\substack{1\leq m\leq k \\ (m, k)=1}}e^{\frac{2\pi im(j+s)}{k}}
  \right)\\ \nonumber
  &-\frac{\pi^8}{9k^8}\phi(k)
  \left(
  \sum\limits_{j=1}^{k-1} \sum\limits_{s=1}^{k-1}j^2s^4\sum\limits_{\substack{1\leq m\leq k \\ (m, k)=1}}e^{\frac{2\pi im(j-s)}{k}}+\sum\limits_{j=1}^{k-1} \sum\limits_{s=1}^{k-1}j^2s^4\sum\limits_{\substack{1\leq m\leq k \\ (m, k)=1}}e^{\frac{2\pi im(j+s)}{k}}
  \right)\\ \nonumber
   &+\frac{2\pi^8}{9k^{6}}\phi(k)
  \left(
  \sum\limits_{j=1}^{k-1} \sum\limits_{s=1}^{k-1}j^2s^2\sum\limits_{\substack{1\leq m\leq k \\ (m, k)=1}}e^{\frac{2\pi im(j-s)}{k}}+\sum\limits_{j=1}^{k-1} \sum\limits_{s=1}^{k-1}j^2s^2\sum\limits_{\substack{1\leq m\leq k \\ (m, k)=1}}e^{\frac{2\pi im(j+s)}{k}}
  \right) \nonumber.
 \end{align}
 
 We simplify each term in this sum. First we consider the term
 \begin{align*} \frac{\pi^8}{18k^{10}}\phi(k)
  \left(
  \sum\limits_{j=1}^{k-1} \sum\limits_{s=1}^{k-1}j^4s^4\sum\limits_{\substack{1\leq m\leq k \\ (m, k)=1}}e^{\frac{2\pi im(j-s)}{k}}+\sum\limits_{j=1}^{k-1} \sum\limits_{s=1}^{k-1}j^4s^4\sum\limits_{\substack{1\leq m\leq k \\ (m, k)=1}}e^{\frac{2\pi im(j+s)}{k}}
  \right).
 \end{align*}

 For simplicity, write $\alpha=e^{\frac{2\pi im}{k}}$.
  
Then
 \begin{align*}
& \sum\limits_{j=1}^{k-1} \sum\limits_{s=1}^{k-1}j^4s^4\sum\limits_{\substack{1\leq m\leq k \\ (m, k)=1}}e^{\frac{2\pi im(j-s)}{k}}\\
 =&\sum\limits_{\substack{1\leq m\leq k \\ (m, k)=1}}\left(\sum\limits_{j=1}^{k-1}j^4e^{\frac{2\pi imj}{k}}	\right)
 \left(
 \sum\limits_{s=1}^{k-1}s^4e^{\frac{-2\pi ims}{k}}
 \right)\\
 =&\sum\limits_{\substack{1\leq m\leq k \\ (m, k)=1}}
  \left(
 \frac{k^4}{\alpha-1}-\frac{4k^3\alpha}{(\alpha-1)^2}+\frac{6k^2\alpha+6k^2\alpha^2}{(\alpha-1)^3}-\frac{(4k\alpha+16k\alpha^2+4k\alpha^3)}{(\alpha-1)^4}
 \right)\\
 &\left(
 \frac{-k^4\alpha}{\alpha-1}-\frac{4k^3\alpha}{(\alpha-1)^2}-\frac{(6k^2\alpha+6k^2\alpha^2)}{(\alpha-1)^3}-\frac{(4k\alpha+16k\alpha^2+4k\alpha^3)}{(\alpha-1)^4}
 \right)
\\ & (\text{ By identities }(\ref{j^4}) \text{ and}(\ref{j^-4}))
  \text{ which expands to} \\
   =&\sum\limits_{\substack{1\leq m\leq k \\ (m, k)=1}}
 \biggl[
  -\frac{k^8\alpha}{(\alpha-1)^2}-\frac{4k^7\alpha}{(\alpha-1)^3}+\frac{4k^7\alpha^2}{(\alpha-1)^3}-\frac{6k^6\alpha}{(\alpha-1)^4}+\frac{4k^6\alpha^2}{(\alpha-1)^4}-\frac{6k^6\alpha^3}{(\alpha-1)^4}\\
 & -\frac{4k^5\alpha}{(\alpha-1)^5}-\frac{12k^5\alpha^2}{(\alpha-1)^5}+\frac{12k^5\alpha^3}{(\alpha-1)^5}+\frac{4k^5\alpha^4}{(\alpha-1)^5}-\frac{4k^4\alpha^2}{(\alpha-1)^6}+\frac{56k^4\alpha^3}{(\alpha-1)^6}\\&-\frac{4k^4\alpha^4}{(\alpha-1)^6}+\frac{16k^2\alpha^2}{(\alpha-1)^8}+\frac{128k^2\alpha^3}{(\alpha-1)^8}+\frac{288k^2\alpha^4}{(\alpha-1)^8}+\frac{128k^2\alpha^5}{(\alpha-1)^8}+\frac{16k^2\alpha^6}{(\alpha-1)^8}
 \biggr].
 \end{align*}
 By similar computations using equation (\ref{j^4}) we get
 \begin{align*}
& \sum\limits_{j=1}^{k-1} \sum\limits_{s=1}^{k-1}j^4s^4\sum\limits_{\substack{1\leq m\leq k \\ (m, k)=1}}e^{\frac{2\pi im(j+s)}{k}}\\
   =&\sum\limits_{\substack{1\leq m\leq k \\ (m, k)=1}}
 \biggl[
  \frac{k^8}{(\alpha-1)^2}-\frac{8k^7\alpha}{(\alpha-1)^3}+\frac{12k^6\alpha}{(\alpha-1)^4}+\frac{28k^6\alpha^2}{(\alpha-1)^4}-\frac{8k^5\alpha}{(\alpha-1)^5}\\&-\frac{80k^5\alpha^2}{(\alpha-1)^5}-\frac{56k^5\alpha^3}{(\alpha-1)^5}
  +\frac{68k^4\alpha^2}{(\alpha-1)^6}+\frac{200k^4\alpha^3}{(\alpha-1)^6}+\frac{68k^4\alpha^4}{(\alpha-1)^6} -\frac{48k^3\alpha^2}{(\alpha-1)^7}\\&-\frac{240k^3\alpha^3}{(\alpha-1)^7}-\frac{240k^3\alpha^4}{(\alpha-1)^7}-\frac{48k^3\alpha^5}{(\alpha-1)^7}
  +\frac{16k^2\alpha^2}{(\alpha-1)^8}+\frac{256k^2\alpha^4}{(\alpha-1)^8}+\frac{16k^2\alpha^6}{(\alpha-1)^8}\\&+\frac{32k^2\alpha^4}{(\alpha-1)^8}+\frac{128k^2\alpha^3}{(\alpha-1)^8}+\frac{128k^2\alpha^5}{(\alpha-1)^8}
\biggr].
 \end{align*}
 Thus we get
 \begin{align*}
 & \frac{\pi^8}{18k^{10}}\phi(k)
  \left(
  \sum\limits_{j=1}^{k-1} \sum\limits_{s=1}^{k-1}j^4s^4\sum\limits_{\substack{1\leq m\leq k \\ (m, k)=1}}e^{\frac{2\pi im(j-s)}{k}}+\sum\limits_{j=1}^{k-1} \sum\limits_{s=1}^{k-1}j^4s^4\sum\limits_{\substack{1\leq m\leq k \\ (m, k)=1}}e^{\frac{2\pi im(j+s)}{k}}
  \right)\\
   =&\frac{\pi^8}{18k^{10}}\phi(k)\sum\limits_{\substack{1\leq m\leq k \\ (m, k)=1}}\biggl[
\frac{k^8}{(\alpha-1)^2}-\frac{k^8\alpha}{(\alpha-1)^2}-\frac{12k^7\alpha}{(\alpha-1)^3}+\frac{4k^7\alpha^2}{(\alpha-1)^3}+\frac{6k^6\alpha}{(\alpha-1)^4}\\&+\frac{32k^6\alpha^2}{(\alpha-1)^4}-\frac{6k^6\alpha^3}{(\alpha-1)^4}- \frac{12k^5\alpha}{(\alpha-1)^5}-\frac{92k^5\alpha^2}{(\alpha-1)^5}-\frac{44k^5\alpha^3}{(\alpha-1)^5} +\frac{4k^5\alpha^4}{(\alpha-1)^5}\\&+\frac{64k^4\alpha^2}{(\alpha-1)^6}+\frac{256k^4\alpha^3}{(\alpha-1)^6}+\frac{64k^4\alpha^4}{(\alpha-1)^6} -\frac{48k^3\alpha^2}{(\alpha-1)^7}-\frac{240k^3\alpha^3}{(\alpha-1)^7}-\frac{240k^3\alpha^4}{(\alpha-1)^7}\\&-\frac{48k^3\alpha^5}{(\alpha-1)^7}
  +\frac{32k^2\alpha^2}{(\alpha-1)^8}+\frac{256k^2\alpha^3}{(\alpha-1)^8}+\frac{576k^2\alpha^4}{(\alpha-1)^8}+\frac{256k^2\alpha^5}{(\alpha-1)^8}+\frac{32k^2\alpha^6}{(\alpha-1)^8}
  \biggr].
 \end{align*}
 Proceeding using the method used for simplifying (\ref{a^3/(a-1)^5}), we have
  \begin{align*}
 \sum\limits_{\substack{1\leq m\leq k \\ (m, k)=1}}\frac{-12k^5\alpha}{(\alpha-1)^5}
   =&-12k^5\sum\limits_{\substack{1\leq m\leq k \\ (m, k)=1}}\frac{\cos(\frac{3\pi m}{k})-i\sin(\frac{3\pi m}{k})}{(2i)^5 \sin^5(\frac{\pi m}{k})}\\
 =&-12k^5
 \biggl[
\frac{4}{32i}  \sum\limits_{\substack{1\leq m\leq k \\ (m, k)=1}}
\frac{\cos^3(\frac{\pi m}{k})}{\sin^5(\frac{\pi m}{k})}-\frac{3}{32i}  \sum\limits_{\substack{1\leq m\leq k \\ (m, k)=1}}
\frac{\cos(\frac{\pi m}{k})}{\sin^5(\frac{\pi m}{k})}  \\
&-\frac{3}{32}\sum\limits_{\substack{1\leq m\leq k \\ (m, k)=1}}
\frac{1}{\sin^4(\frac{\pi m}{k})}+\frac{1}{8}\sum\limits_{\substack{1\leq m\leq k \\ (m, k)=1}}
\frac{1}{\sin^2(\frac{\pi m}{k})}
\biggr]  \\
 =&-12k^5
  \left[  
-\frac{3}{32}\sum\limits_{\substack{1\leq m\leq k \\ (m, k)=1}}
\frac{1}{\sin^4(\frac{\pi m}{k})}+\frac{1}{8}\sum\limits_{\substack{1\leq m\leq k \\ (m, k)=1}}
\frac{1}{\sin^2(\frac{\pi m}{k})}
 \right]\\
& 
  \text{ since } \sum\limits_{\substack{1\leq m\leq k \\ (m, k)=1}}
\frac{\cos^3(\frac{\pi m}{k})}{\sin^5(\frac{\pi m}{k})}=0 \text{ and } \sum\limits_{\substack{1\leq m\leq k \\ (m, k)=1}}
\frac{\cos(\frac{\pi m}{k})}{\sin^5(\frac{\pi m}{k})}=0\\
 =&\frac{k^5}{40}J_4(k)-\frac{k^5}{4}J_2(k) .
 \end{align*}
 Similarly, using the fact that $\sum\limits_{\substack{1\leq m\leq k \\ (m, k)=1}}
\frac{\cos^3(\frac{\pi m}{k})}{\sin^5(\frac{\pi m}{k})}=\sum\limits_{\substack{1\leq m\leq k \\ (m, k)=1}}\cot^3(\frac{\pi m}{k})\\+\sum\limits_{\substack{1\leq m\leq k \\ (m, k)=1}}\cot^5(\frac{\pi m}{k})=0$ and $  \sum\limits_{\substack{1\leq m\leq k \\ (m, k)=1}}
\frac{\cos(\frac{\pi m}{k})}{\sin^5(\frac{\pi m}{k})}=0$ we get
\begin{align*}
 \sum\limits_{\substack{1\leq m\leq k \\ (m, k)=1}}\frac{4k^5\alpha^4}{(\alpha-1)^5}
 =\frac{k^5}{120}J_4(k)-\frac{k^5}{12}J_2(k).
 \end{align*}
 \begin{align*}
  \sum\limits_{\substack{1\leq m\leq k \\ (m, k)=1}}\frac{-48k^3\alpha^2}{(\alpha-1)^7}=&-48k^3\sum\limits_{\substack{1\leq m\leq k \\ (m, k)=1}}\frac{\alpha^2}{(\alpha-1)^7}\\
  =&-48k^3\sum\limits_{\substack{1\leq m\leq k \\ (m, k)=1}}\frac{\cos(\frac{3\pi m}{k})-i\sin(\frac{3\pi m}{k})}{(2i)^7 \sin^7(\frac{\pi m}{k})} \\
   =&-48k^3
  \biggl[
\frac{-4}{128i}  \sum\limits_{\substack{1\leq m\leq k \\ (m, k)=1}}
\frac{\cos^3(\frac{\pi m}{k})}{\sin^7(\frac{\pi m}{k})}+\frac{3}{128i}  \sum\limits_{\substack{1\leq m\leq k \\ (m, k)=1}}
\frac{\cos(\frac{\pi m}{k})}{\sin^7(\frac{\pi m}{k})}  \\
&+\frac{3}{128}\sum\limits_{\substack{1\leq m\leq k \\ (m, k)=1}}
\frac{1}{\sin^6(\frac{\pi m}{k})}-\frac{1}{32}\sum\limits_{\substack{1\leq m\leq k \\ (m, k)=1}}
\frac{1}{\sin^4(\frac{\pi m}{k})}
 \biggr] \\
 =&-48k^3
  \left[ 
\frac{3}{128}\sum\limits_{\substack{1\leq m\leq k \\ (m, k)=1}}
\frac{1}{\sin^6(\frac{\pi m}{k})}-\frac{1}{32}\sum\limits_{\substack{1\leq m\leq k \\ (m, k)=1}}
\frac{1}{\sin^4(\frac{\pi m}{k})}
 \right]
\end{align*} 
  Since \begin{align*}
  \sum\limits_{\substack{1\leq m\leq k \\ (m, k)=1}}
\frac{\cos^3(\frac{\pi m}{k})}{\sin^7(\frac{\pi m}{k})}=\sum\limits_{\substack{1\leq m\leq k \\ (m, k)=1}}\cot^3(\frac{\pi m}{k})+2\sum\limits_{\substack{1\leq m\leq k \\ (m, k)=1}}\cot^5(\frac{\pi m}{k}) +\sum\limits_{\substack{1\leq m\leq k \\ (m, k)=1}}\cot^7(\frac{\pi m}{k})=0
\end{align*} and
 \begin{align*}
&\sum\limits_{\substack{1\leq m\leq k \\ (m, k)=1}} 
\frac{\cos(\frac{\pi m}{k})}{\sin^7(\frac{\pi m}{k})}\\&=\sum\limits_{\substack{1\leq m\leq k \\ (m, k)=1}}\cot(\frac{\pi m}{k})+3\sum\limits_{\substack{1\leq m\leq k \\ (m, k)=1}}\cot^3(\frac{\pi m}{k})+3\sum\limits_{\substack{1\leq m\leq k \\ (m, k)=1}}\cot^5(\frac{\pi m}{k})+\sum\limits_{\substack{1\leq m\leq k \\ (m, k)=1}}\cot^7(\frac{\pi m}{k})=0
\end{align*} 
Hence
\begin{align*}
\sum\limits_{\substack{1\leq m\leq k \\ (m, k)=1}}\frac{-48k^3\alpha^2}{(\alpha-1)^7}=-\frac{k^3}{420}J_6(k)+\frac{k^3}{720}J_4(k)+\frac{2k^3}{15}J_2(k).
\end{align*}
  Using the fact that $\sum\limits_{\substack{1\leq m\leq k \\ (m, k)=1}} 
\frac{\cos(\frac{\pi m}{k})}{\sin^7(\frac{\pi m}{k})}
=0$ we get
  \begin{align*}
   \sum\limits_{\substack{1\leq m\leq k \\ (m, k)=1}}\frac{-240k^3\alpha^3}{(\alpha-1)^7}
   =-\frac{k^3}{252}J_6(k)-\frac{k^3}{24}J_4(k)-\frac{k^3}{3}J_2(k)
  \end{align*}and
  \begin{align*}
   \sum\limits_{\substack{1\leq m\leq k \\ (m, k)=1}}\frac{-240k^3\alpha^4}{(\alpha-1)^7}
=\frac{k^3}{252}J_6(k)+\frac{k^3}{24}J_4(k)+\frac{k^3}{3}J_2(k).
  \end{align*}
  Now using the fact that $\sum\limits_{\substack{1\leq m\leq k \\ (m, k)=1}}
\frac{\cos^3(\frac{\pi m}{k})}{\sin^7(\frac{\pi m}{k})}=0$ and $ \sum\limits_{\substack{1\leq m\leq k \\ (m, k)=1}}
\frac{\cos(\frac{\pi m}{k})}{\sin^7(\frac{\pi m}{k})}=0$ we get
   \begin{align*}
  \sum\limits_{\substack{1\leq m\leq k \\ (m, k)=1}}\frac{-48k^3\alpha^5}{(\alpha-1)^7}
=\frac{k^3}{420}J_6(k)-\frac{k^3}{720}J_4(k)-\frac{2k^3}{15}J_2(k).
  \end{align*}
  Using  $ \sum\limits_{\substack{1\leq m\leq k \\ (m, k)=1}}
\frac{\cos(\frac{\pi m}{k})}{\sin^7(\frac{\pi m}{k})}=0, \sum\limits_{\substack{1\leq m\leq k \\ (m, k)=1}}
\frac{\cos(\frac{\pi m}{k})}{\sin^5(\frac{\pi m}{k})}=0$ and the identities (\ref{eqn:1bysin4}), (\ref{1bysin6}) we get
  \begin{align*}
  \sum\limits_{\substack{1\leq m\leq k \\ (m, k)=1}}\frac{32k^2\alpha^2}{(\alpha-1)^8}
=&32k^2
  \biggl[
\frac{1}{256}\sum\limits_{\substack{1\leq m\leq k \\ (m, k)=1}}
\frac{1}{\sin^8(\frac{\pi m}{k})}-\frac{1}{32}  \sum\limits_{\substack{1\leq m\leq k \\ (m, k)=1}}
\frac{1}{\sin^6(\frac{\pi m}{k})}\\&+\frac{1}{32}  \sum\limits_{\substack{1\leq m\leq k \\ (m, k)=1}}
\frac{1}{\sin^4(\frac{\pi m}{k})}
\biggr]\\
=&\frac{k^2}{8}\sum\limits_{\substack{1\leq m\leq k \\ (m, k)=1}}
\frac{1}{\sin^8(\frac{\pi m}{k})}-\frac{2k^2}{945}J_6(k)+\frac{2}{45}J_2(k),
  \end{align*}
  \begin{align*}
 \sum\limits_{\substack{1\leq m\leq k \\ (m, k)=1}}\frac{256k^2\alpha^3}{(\alpha-1)^8}
   =k^2 \sum\limits_{\substack{1\leq m\leq k \\ (m, k)=1}}\frac{1}{\sin^8(\frac{\pi m}{k})}-\frac{4k^2}{945}J_6(k)-\frac{2k^2}{45}J_4(k)-\frac{16k^2}{45}J_2(k),
  \end{align*}
  \begin{align*}
 \sum\limits_{\substack{1\leq m\leq k \\ (m, k)=1}}\frac{576k^2\alpha^4}{(\alpha-1)^8}=\frac{9k^2}{8}\sum\limits_{\substack{1\leq m\leq k \\ (m, k)=1}}\frac{1}{\sin^8(\frac{\pi m}{k})},
  \end{align*}
  \begin{align*}
  \sum\limits_{\substack{1\leq m\leq k \\ (m, k)=1}}\frac{256k^2\alpha^5}{(\alpha-1)^8}=k^2 \sum\limits_{\substack{1\leq m\leq k \\ (m, k)=1}}\frac{1}{\sin^8(\frac{\pi m}{k})}-\frac{4k^2}{945}J_6(k)-\frac{2k^2}{45}J_4(k)-\frac{16k^2}{45}J_2(k)
  \end{align*}and
  \begin{align*}
 \sum\limits_{\substack{1\leq m\leq k \\ (m, k)=1}}\frac{32k^2\alpha^6}{(\alpha-1)^8}=\frac{k^2}{8}\sum\limits_{\substack{1\leq m\leq k \\ (m, k)=1}}
\frac{1}{\sin^8(\frac{\pi m}{k})}-\frac{2k^2}{945}J_6(k)+\frac{2}{45}J_2(k).
  \end{align*}
  Now as in the case of $\sum\limits_{\substack{1\leq m\leq k \\ (m, k)=1}}
\frac{1}{\sin^6(\frac{\pi m}{k})}$, we get
\begin{align*}
\sum\limits_{\substack{1\leq m\leq k \\ (m, k)=1}}
\frac{1}{\sin^8(\frac{\pi m}{k})}=\frac{1}{4725}J_8(k)+\frac{8}{2835}J_6(k)+\frac{14}{675}J_4(k) +\frac{16}{105}J_2(k).
\end{align*}
On substituting the above identities and (\ref{1/(a-1)^2}), (\ref{a/(a-1)^2}), (\ref{a/(a-1)^3}), (\ref{a^2/(a-1)^3}), (\ref{a/(a-1)^4}), (\ref{a^2/(a-1)^4}), (\ref{a^3/(a-1)^4}), (\ref{a^2/(a-1)^5}), (\ref{a^3/(a-1)^5}), (\ref{a^2/(a-1)^6}), (\ref{a^3/(a-1)^6}), (\ref{a^4/(a-1)^6}) and simplifying, we get
   \begin{align}\label{j^4s^4L(4,x)}
  &\frac{\pi^8}{18k^{10}}\phi(k)
  \left(
  \sum\limits_{j=1}^{k-1} \sum\limits_{s=1}^{k-1}j^4s^4\sum\limits_{\substack{1\leq m\leq k \\ (m, k)=1}}e^{\frac{2\pi im(j-s)}{k}}+\sum\limits_{j=1}^{k-1} \sum\limits_{s=1}^{k-1}j^4s^4\sum\limits_{\substack{1\leq m\leq k \\ (m, k)=1}}e^{\frac{2\pi im(j+s)}{k}}
  \right)\nonumber \\
     =&\frac{\pi^8}{k^8}\phi(k)
 \biggl[
  \frac{k^6}{36}\phi(k)-\frac{k^5}{27}J_2(k)+\frac{k^4}{405}J_4(k)+\frac{2k^4}{81}J_2(k)+\frac{k^3}{270}J_4(k)
  -\frac{2k^2}{2835}J_6(k)\\&-\frac{k^2}{405}J_4(k)-\frac{4k^2}{405}J_2(k)+\frac{J_8(k)}{18900}+\frac{J_4(k)}{4050}+\frac{2}{567}J_2(k)
 \biggr]\nonumber.
  \end{align}
   Now we consider the term
   \begin{align*}
   \frac{\pi^8}{9k^{8}}\phi(k)
  \left(
  \sum\limits_{j=1}^{k-1} \sum\limits_{s=1}^{k-1}j^4s^2\sum\limits_{\substack{1\leq m\leq k \\ (m, k)=1}}e^{\frac{2\pi im(j-s)}{k}}+\sum\limits_{j=1}^{k-1} \sum\limits_{s=1}^{k-1}j^4s^2\sum\limits_{\substack{1\leq m\leq k \\ (m, k)=1}}e^{\frac{2\pi im(j+s)}{k}}
  \right).
  \end{align*}
  The first term in this expands to
  \begin{align*}
  & \sum\limits_{j=1}^{k-1} \sum\limits_{s=1}^{k-1}j^4s^2\sum\limits_{\substack{1\leq m\leq k \\ (m, k)=1}}e^{\frac{2\pi im(j-s)}{k}}\\
 =&\sum\limits_{\substack{1\leq m\leq k \\ (m, k)=1}}\left(\sum\limits_{j=1}^{k-1}j^4e^{\frac{2\pi imj}{k}}	\right)
 \left(
 \sum\limits_{s=1}^{k-1}s^2e^{\frac{-2\pi ims}{k}}
 \right)\\
 =&\sum\limits_{\substack{1\leq m\leq k \\ (m, k)=1}}
  \left(
 \frac{k^4}{\alpha-1}-\frac{4k^3\alpha}{(\alpha-1)^2}+\frac{6k^2\alpha+6k^2\alpha^2}{(\alpha-1)^3}-\frac{(4k\alpha+16k\alpha^2+4k\alpha^3)}{(\alpha-1)^4}
 \right)\\
& \left(
-k^2- \frac{k^2}{\alpha-1}-\frac{2k\alpha}{(\alpha-1)^2}
 \right)(\text{ By identity }(\ref{j^4}))\\
  =&\sum\limits_{\substack{1\leq m\leq k \\ (m, k)=1}}
\biggl[
 - \frac{k^6}{\alpha-1}-\frac{k^6}{(\alpha-1)^2}+\frac{4k^5\alpha}{(\alpha-1)^2}+\frac{2k^5\alpha}{(\alpha-1)^3}-\frac{6k^4\alpha}{(\alpha-1)^3}\\&-\frac{6k^4\alpha^2}{(\alpha-1)^3}-\frac{6k^4\alpha}{(\alpha-1)^4}+\frac{2k^4\alpha^2}{(\alpha-1)^4}+\frac{4k^3\alpha}{(\alpha-1)^4}+\frac{16k^3\alpha^2}{(\alpha-1)^4}+\frac{4k^3\alpha^3}{(\alpha-1)^4}\\&+\frac{4k^3\alpha}{(\alpha-1)^5}+\frac{4k^3\alpha^2}{(\alpha-1)^5}-\frac{8k^3\alpha^3}{(\alpha-1)^5}+\frac{8k^2\alpha^2}{(\alpha-1)^6}+\frac{32k^2\alpha^3}{(\alpha-1)^6}+\frac{8k^2\alpha^4}{(\alpha-1)^6}
 \biggr].
  \end{align*}
  Similarly the second term expands to
  \begin{align*}
  & \sum\limits_{j=1}^{k-1} \sum\limits_{s=1}^{k-1}j^4s^2\sum\limits_{\substack{1\leq m\leq k \\ (m, k)=1}}e^{\frac{2\pi im(j+s)}{k}}\\
 =&\sum\limits_{\substack{1\leq m\leq k \\ (m, k)=1}}
 \biggl[
\frac{k^6}{(\alpha-1)^2}-\frac{6k^5\alpha}{(\alpha-1)^3}-\frac{6k^4\alpha}{(\alpha-1)^4}+\frac{14k^4\alpha^2}{(\alpha-1)^4}-\frac{4k^3\alpha}{(\alpha-1)^5}\\&-\frac{28k^3\alpha^2}{(\alpha-1)^5}-\frac{16k^3\alpha^3}{(\alpha-1)^5}+\frac{8k^2\alpha^2}{(\alpha-1)^6}+\frac{32k^2\alpha^3}{(\alpha-1)^6}+\frac{8k^2\alpha^4}{(\alpha-1)^6}
 \biggr].
  \end{align*}
  Thus we get
  \begin{align*}
  &\frac{\pi^8}{9k^{8}}\phi(k)
  \left(
  \sum\limits_{j=1}^{k-1} \sum\limits_{s=1}^{k-1}j^4s^2\sum\limits_{\substack{1\leq m\leq k \\ (m, k)=1}}e^{\frac{2\pi im(j-s)}{k}}+\sum\limits_{j=1}^{k-1} \sum\limits_{s=1}^{k-1}j^4s^2\sum\limits_{\substack{1\leq m\leq k \\ (m, k)=1}}e^{\frac{2\pi im(j+s)}{k}}
  \right)\\
    =&\frac{\pi^8}{9k^{8}}\phi(k)\sum\limits_{\substack{1\leq m\leq k \\ (m, k)=1}}
 \biggl[
  - \frac{k^6}{\alpha-1}+\frac{4k^5\alpha}{(\alpha-1)^2}-\frac{4k^5\alpha}{(\alpha-1)^3}-\frac{6k^4\alpha}{(\alpha-1)^3}-\frac{6k^4\alpha^2}{(\alpha-1)^3}\\&+\frac{16k^4\alpha^2}{(\alpha-1)^4}+\frac{4k^3\alpha}{(\alpha-1)^4}+\frac{16k^3\alpha^2}{(\alpha-1)^4}+\frac{4k^3\alpha^3}{(\alpha-1)^4}-\frac{24k^3\alpha^2}{(\alpha-1)^5}-\frac{24k^3\alpha^3}{(\alpha-1)^5}\\&+\frac{16k^2\alpha^2}{(\alpha-1)^6}+\frac{64k^2\alpha^3}{(\alpha-1)^6}+\frac{16k^2\alpha^4}{(\alpha-1)^6}
 \biggr].
  \end{align*}
 Using identities (\ref{a/(a-1)^2}), (\ref{a^2/(a-1)^3}), (\ref{a/(a-1)^3}), (\ref{a^3/(a-1)^4}), (\ref{a^2/(a-1)^4}), (\ref{a/(a-1)^4}), (\ref{a^3/(a-1)^5}), (\ref{a^2/(a-1)^5}), (\ref{a^4/(a-1)^6}), (\ref{a^3/(a-1)^6}), (\ref{a^2/(a-1)^6}), (\ref{1/a-1}), we further modify the above to 

\begin{align}\label{j^4s^2L(4,x)}
&\frac{\pi^8}{9k^{8}}\phi(k)
  \left(
  \sum\limits_{j=1}^{k-1} \sum\limits_{s=1}^{k-1}j^4s^2\sum\limits_{\substack{1\leq m\leq k \\ (m, k)=1}}e^{\frac{2\pi im(j-s)}{k}}+\sum\limits_{j=1}^{k-1} \sum\limits_{s=1}^{k-1}j^4s^2\sum\limits_{\substack{1\leq m\leq k \\ (m, k)=1}}e^{\frac{2\pi im(j+s)}{k}}
  \right)\nonumber \\
   =&\frac{\pi^8}{k^{8}}\phi(k)
  \biggl[
  \frac{k^6}{18}\phi(k)-\frac{k^5}{18}J_2(k)+\frac{k^4}{405}J_4(k)+\frac{2k^4}{81}J_2(k)\\&+\frac{k^3}{270}J_4(k)-\frac{k^2}{2835}J_6(k)-\frac{k^2}{810}J_4(k)-\frac{2k^2}{405}J_2(k)
  \biggr]\nonumber.
\end{align}
Next we consider 
\begin{align*}
\frac{\pi^8}{9k^8}\phi(k)
  \left(
  \sum\limits_{j=1}^{k-1} \sum\limits_{s=1}^{k-1}j^2s^4\sum\limits_{\substack{1\leq m\leq k \\ (m, k)=1}}e^{\frac{2\pi im(j-s)}{k}}-\sum\limits_{j=1}^{k-1} \sum\limits_{s=1}^{k-1}j^2s^4\sum\limits_{\substack{1\leq m\leq k \\ (m, k)=1}}e^{\frac{2\pi im(j+s)}{k}}
  \right).
  \end{align*}
  Interchange the variables $j$ and $s$, we get
  \begin{align}\label{j^2s^4L(4,x)}
  &\frac{\pi^8}{9k^8}\phi(k)
  \left(
  \sum\limits_{j=1}^{k-1} \sum\limits_{s=1}^{k-1}j^4s^2\sum\limits_{\substack{1\leq m\leq k \\ (m, k)=1}}e^{\frac{2\pi i(k-m)(s-j)}{k}}-\sum\limits_{j=1}^{k-1} \sum\limits_{s=1}^{k-1}j^4s^2\sum\limits_{\substack{1\leq m\leq k \\ (m, k)=1}}e^{\frac{2\pi im(j+s)}{k}}
  \right) \nonumber \\
 =& \frac{\pi^8}{9k^8}\phi(k)
  \left(
  \sum\limits_{j=1}^{k-1} \sum\limits_{s=1}^{k-1}j^4s^2\sum\limits_{\substack{1\leq m\leq k \\ (m, k)=1}}e^{\frac{2\pi im(j-s)}{k}}-\sum\limits_{j=1}^{k-1} \sum\limits_{s=1}^{k-1}j^4s^2\sum\limits_{\substack{1\leq m\leq k \\ (m, k)=1}}e^{\frac{2\pi im(j+s)}{k}}
  \right)\nonumber \\
 =&\frac{\pi^8}{k^{8}}\phi(k)
  \biggl[
  \frac{k^6}{18}\phi(k)-\frac{k^5}{18}J_2(k)+\frac{k^4}{405}J_4(k)+\frac{2k^4}{81}J_2(k)+\frac{k^3}{270}J_4(k)\\&-\frac{k^2}{2835}J_6(k)-\frac{k^2}{810}J_4(k)-\frac{2k^2}{405}J_2(k)
 \biggr]\nonumber.
  \end{align}
 By a similar sequence of computations, we may see that
\begin{align*}
& \sum\limits_{j=1}^{k-1} \sum\limits_{s=1}^{k-1}j^2s^2\sum\limits_{\substack{1\leq m\leq k \\ (m, k)=1}}e^{\frac{2\pi im(j-s)}{k}}\\&=\sum\limits_{\substack{1\leq m\leq k \\ (m, k)=1}}
\left(
-\frac{k^4}{\alpha-1}- \frac{k^4}{(\alpha-1)^2}+\frac{2k^3\alpha}{(\alpha-1)^2}+\frac{4k^2\alpha^2}{(\alpha-1)^4}
\right)
\end{align*}  
and
\begin{align*}
 \sum\limits_{j=1}^{k-1} \sum\limits_{s=1}^{k-1}j^2s^2\sum\limits_{\substack{1\leq m\leq k \\ (m, k)=1}}e^{\frac{2\pi im(j+s)}{k}}
 =\sum\limits_{\substack{1\leq m\leq k \\ (m, k)=1}}
\left(
\frac{k^4}{(\alpha-1)^2}-\frac{4k^3\alpha}{(\alpha-1)^3}+\frac{4k^2\alpha^2}{(\alpha-1)^4}
\right).
\end{align*} 
Hence we get
\begin{align}\label{j^2s^2L(4,x)}
&\frac{2\pi^8}{9k^{6}}\phi(k)
  \left(
  \sum\limits_{j=1}^{k-1} \sum\limits_{s=1}^{k-1}j^2s^2\sum\limits_{\substack{1\leq m\leq k \\ (m, k)=1}}e^{\frac{2\pi im(j-s)}{k}}+\sum\limits_{j=1}^{k-1} \sum\limits_{s=1}^{k-1}j^2s^2\sum\limits_{\substack{1\leq m\leq k \\ (m, k)=1}}e^{\frac{2\pi im(j+s)}{k}}
  \right) \nonumber \\
  &=\sum\limits_{\substack{1\leq m\leq k \\ (m, k)=1}}
\left(
-\frac{k^4}{\alpha-1}+\frac{2k^3\alpha}{(\alpha-1)^2}-\frac{4k^3\alpha}{(\alpha-1)^3}+\frac{8k^2\alpha^2}{(\alpha-1)^4}
\right)\nonumber\\
  &=\frac{\pi^8}{k^{8}}\phi(k)
  \left(
  \frac{k^6}{9}\phi(k)-\frac{2k^5}{27}J_2(k)+\frac{k^4}{405}J_4(k)+\frac{2k^4}{81}J_2(k)
  \right) \\& \text{ using identities } (\ref{a/(a-1)^2}), (\ref{a/(a-1)^3}), (\ref{a^2/(a-1)^4}), (\ref{1/a-1})\nonumber.
\end{align}
Hence on substituting (\ref{j^4s^4L(4,x)}), (\ref{j^4s^2L(4,x)}), (\ref{j^2s^4L(4,x)}), (\ref{j^2s^2L(4,x)}) into (\ref{L(4,x)1}), we get
\begin{align*}
&\frac{\pi^8}{9k^6}\sum\limits_{\substack{\chi(\text{mod }  k) \\ \chi \text{ even}}}\left|\frac{1}{k^2}\sum\limits_{j=1}^{k-1} j^4G(j, \chi)-2\sum\limits_{j=1}^{k-1} j^2G(j, \chi)
\right|^2\\
&=\frac{\pi^8}{k^{8}}\phi(k)
\left(
 \frac{k^6}{36}\phi(k)+\frac{J_8(k)}{18900}+\frac{J_4(k)}{4050}+\frac{2}{567}J_2(k)
-\frac{k^3}{270}J_4(k)
\right).
\end{align*}
Now we have to find 
$
\frac{\pi^8}{9k^6}\left|
\frac{1}{k^2}\sum\limits_{j=1}^{k-1} j^4G(j, \chi_0)-2\sum\limits_{j=1}^{k-1} j^2G(j, \chi_0)
\right|^2 
$ for evaluating identity (\ref{6*}).

We have 
\begin{align*}
\sum\limits_{j=1}^{k-1} j^2G(j, \chi_0)=\sum\limits_{j=1}^{k-1} j^2R_k(j)&=-\frac{k^2\phi(k)}{2}+\frac{k^3}{\pi^2}L(2, \chi_0)\text{ \cite[identity 3.22]{alkan2011mean}}\\&=-\frac{k^2\phi(k)}{2}+\frac{k}{6}J_2(k).
\end{align*}
Now to find $\sum\limits_{j=1}^{k-1} j^4G(j, \chi_0)=\sum\limits_{j=1}^{k-1} j^4R_k(j)$.

We use the identity $
R_k(j)=\frac{\phi(k)\mu(\frac{k}{(k, j)})}{\phi(\frac{k}{(k,j)})}
$.
\begin{align*}
\sum\limits_{j=1}^{k} j^4R_k(j)&=\sum\limits_{j=1}^{k-1} j^4\frac{\phi(k)\mu(\frac{k}{(k, j)})}{\phi(\frac{k}{(k,j)})}\\
&=\phi(k)\sum\limits_{d|k}\sum\limits_{\substack{1\leq j \leq k\\(k, j)=d}}\frac{j^4\mu(\frac{k}{d})}{\phi(\frac{k}{d})}\\
&=k^4\phi(k)+\phi(k)\sum\limits_{\substack{d|k\\d\neq k}}\sum\limits_{\substack{1\leq j \leq k\\(k, j)=d}}\frac{j^4\mu(\frac{k}{d})}{\phi(\frac{k}{d})}\\
&=k^4\phi(k)+\phi(k)\sum\limits_{\substack{d|k\\d\neq k}}\frac{d^4\mu(\frac{k}{d})}{\phi(\frac{k}{d})}\sum\limits_{\substack{1\leq j \leq k\\(k, j)=d}}\left(\frac{j}{d}\right)^4.
\end{align*}
Therefore 
\begin{align}\label{R_k}
\sum\limits_{j=1}^{k-1} j^4R_k(j)=\phi(k)\sum\limits_{\substack{d|k\\d\neq k}}\frac{d^4\mu(\frac{k}{d})}{\phi(\frac{k}{d})}\sum\limits_{\substack{1\leq j \leq k\\(k, j)=d}}\left(\frac{j}{d}\right)^4.
\end{align}
 
With $n=\frac{k}{d}$ in lemma (\ref{phi_4}) we get

\begin{align} \label{m^4}
\phi_4\left(\frac{k}{d}\right)=\sum\limits_{\substack{1\leq m \leq \frac{k}{d}\\(m, \frac{k}{d})=1}}m^4=\frac{k^4}{5d^4}\phi\left(\frac{k}{d}\right)+\frac{k^3}{3d^3}\prod\limits_{p|(\frac{k}{d})}(1-p)-\frac{k}{30d}\prod\limits_{p|(\frac{k}{d})}(1-p^3).
\end{align}
Denote by $w(n)$, the number of distinct prime divisors of $n$ and $P_s(n)$, product of $s$\textsuperscript{th} powers of all distinct prime divisors of $n$. If $P(n)=P_1(n)$ we have 
\begin{align*}
\phi\left(\frac{k}{d}\right)=\left(\frac{k}{d}\right)\prod\limits_{p|(\frac{k}{d})}\left(1-\frac{1}{p}\right)&=\left( \frac{k}{d}\right)\left(\frac{\prod\limits_{p|(\frac{k}{d})}(p-1)}{\prod\limits_{p|(\frac{k}{d})}p}\right)\\&=\left( \frac{k}{d}\right)(-1)^{w(\frac{k}{d})}\left(\frac{\prod\limits_{p|(\frac{k}{d})}(1-p)}{P(\frac{k}{d})}\right).
\end{align*}

Therefore
\begin{align*}
 \prod\limits_{p|(\frac{k}{d})}(1-p)=\left(\frac{d}{k}\right)(-1)^{w(\frac{k}{d})}P\left(\frac{k}{d}\right)\phi\left(\frac{k}{d}\right).
\end{align*}
We have
\begin{align*}
J_3\left(\frac{k}{d}\right)=\left(\frac{k}{d}\right)^3\prod\limits_{p|(\frac{k}{d})}\left(1-\frac{1}{p^3}\right)&=\left(\frac{k}{d}\right)^3\left(
\frac{\prod\limits_{p|(\frac{k}{d})}(p^3-1)}{\prod\limits_{p|(\frac{k}{d})}p^3}
\right)\\&=\left( \frac{k^3}{d^3}\right)(-1)^{w(\frac{k}{d})}\left(\frac{\prod\limits_{p|(\frac{k}{d})}(1-p^3)}{P_3(\frac{k}{d})}\right).
\end{align*}
 and so
\begin{align*}
 \prod\limits_{p|(\frac{k}{d})}(1-p^3)=\left( \frac{d^3}{k^3}\right)(-1)^{w(\frac{k}{d})}P_3\left(\frac{k}{d}\right)J_3\left(\frac{k}{d}\right).
\end{align*}
Hence from equation (\ref{m^4}) we get
\begin{align*}
\sum\limits_{\substack{1\leq m \leq \frac{k}{d}\\(m, \frac{k}{d})=1}}m^4=&\frac{k^4}{5d^4}\phi\left(\frac{k}{d}\right)+\frac{k^2}{3d^2}(-1)^{w(\frac{k}{d})}P\left(\frac{k}{d}\right)\phi\left(\frac{k}{d}\right)\\&-\frac{d^2}{30k^2}(-1)^{w(\frac{k}{d})}P_3\left(\frac{k}{d}\right)J_3\left(\frac{k}{d}\right).
\end{align*}
So
\begin{align*}
 \sum\limits_{\substack{1\leq j \leq k\\(k, j)=d}}\left(\frac{j}{d}\right)^4=\sum\limits_{\substack{1\leq m \leq \frac{k}{d}\\(m, \frac{k}{d})=1}}m^4=&\frac{k^4}{5d^4}\phi\left(\frac{k}{d}\right)+\frac{k^2}{3d^2}(-1)^{w(\frac{k}{d})}P\left(\frac{k}{d}\right)\phi\left(\frac{k}{d}\right)\\&-\frac{d^2}{30k^2}(-1)^{w(\frac{k}{d})}P_3\left(\frac{k}{d}\right)J_3\left(\frac{k}{d}\right)
 \end{align*}
 and thus equation (\ref{R_k}) becomes,
\begin{align*}
&\sum\limits_{j=1}^{k-1} j^4R_k(j)\\
=&\phi(k)\sum\limits_{\substack{d|k\\d\neq k}}\frac{d^4\mu(\frac{k}{d})}{\phi(\frac{k}{d})}\biggl[
\frac{k^4}{5d^4}\phi\left(\frac{k}{d}\right)+\frac{k^2}{3d^2}(-1)^{w(\frac{k}{d})}P\left(\frac{k}{d}\right)\phi\left(\frac{k}{d}\right)\\&-\frac{d^2}{30k^2}(-1)^{w(\frac{k}{d})}P_3\left(\frac{k}{d}\right)J_3\left(\frac{k}{d}\right)
\biggr]\\
=&\left(\frac{k^4}{5}\right)\phi(k)\sum\limits_{\substack{d|k\\d\neq 1}}\mu(d)+\left(\frac{k^2}{3}\right)\phi(k)\sum\limits_{\substack{d|k\\d\neq k}}d^2(-1)^{w(\frac{k}{d})}P\left(\frac{k}{d}\right)\mu\left(\frac{k}{d}\right)\\
&-\left(\frac{1}{30k^2}\right)\phi(k)\sum\limits_{\substack{d|k\\d\neq k}}d^6(-1)^{w(\frac{k}{d})}\frac{P_3(\frac{k}{d})J_3(\frac{k}{d})\mu(\frac{k}{d})}{\phi(\frac{k}{d})}.
\end{align*}
 Since
$
 \sum\limits_{\substack{d|k\\d\neq 1}}\mu(d)=-1
$
and $
\sum\limits_{d|k}d^2(-1)^{w(\frac{k}{d})}P(\frac{k}{d})\mu(\frac{k}{d})=k^2\prod\limits_{p|k}\left(1+\frac{1}{p}\right) 
$
we get
\begin{align*}
\sum\limits_{j=1}^{k-1} j^4R_k(j)=&-\left(\frac{k^4}{5}\right)\phi(k)+\left(\frac{k^2}{3}\right)\phi(k)\left(-k^2+k^2\prod\limits_{p|k}\left(1+\frac{1}{p}\right)\right)\\&-\left(\frac{1}{30k^2}\right)\phi(k)\sum\limits_{\substack{d|k\\d\neq k}}d^6(-1)^{w(\frac{k}{d})}\frac{P_3(\frac{k}{d})J_3(\frac{k}{d})\mu(\frac{k}{d})}{\phi(\frac{k}{d})}.
\end{align*}
Now we have to evaluate $
\sum\limits_{\substack{d|k\\d\neq k}}d^6(-1)^{w(\frac{k}{d})}\frac{P_3(\frac{k}{d})J_3(\frac{k}{d})\mu(\frac{k}{d})}{\phi(\frac{k}{d})}$.

Consider 
$\sum\limits_{d|k}d^6(-1)^{w(\frac{k}{d})}\frac{P_3(\frac{k}{d})J_3(\frac{k}{d})\mu(\frac{k}{d})}{\phi(\frac{k}{d})}
$.
Let $f(m)=m^6$ and $g(m)=(-1)^{w(m)}\frac{P_3(m)J_3(m)\mu(m)}{\phi(m)}$.

Thus \begin{align*}\sum\limits_{d|k}d^6(-1)^{w(\frac{k}{d})}\frac{P_3(\frac{k}{d})J_3(\frac{k}{d})\mu(\frac{k}{d})}{\phi(\frac{k}{d})}=(f*g)(k)
\end{align*}where $*$ denotes the usual Dirichlet's product.

Since $f$ and $g$ are multiplicative, \begin{align*}(f*g)(k)=(f*g)\left(\prod\limits_{p|k}p^m\right)=\prod\limits_{p|k}(f*g)(p^m)
\end{align*} where $p^m$ is the exact power of a prime $p$ dividing $k$.
\begin{align*}
&\sum\limits_{d|k}d^6(-1)^{w(\frac{k}{d})}\frac{P_3\left(\frac{k}{d}\right)J_3\left(\frac{k}{d}\right)\mu\left(\frac{k}{d}\right)}{\phi(\frac{k}{d})}\\
=& \prod\limits_{p|k}\left(\sum\limits_{d|k}d^6(-1)^{w(\frac{p^m}{d})}\frac{P_3(\frac{p^m}{d})J_3(\frac{p^m}{d})\mu(\frac{p^m}{d})}{\phi(\frac{p^m}{d})}\right)\\
=& \prod\limits_{p|k}\left((p^m-1)^6p^3\left(\frac{1-\frac{1}{p^3}} {p-1}\right)+(p^m)^6
\right)\\& (\text{  since sum is non-zero if } d=p^{m-1} \text{ or } d=p^m )\\
=& \prod\limits_{p|k}\left(
p^{6m-6}\cdot p^3(p^2+p+1)+p^{6m}
\right)\\
=& \left(\prod\limits_{p|k}p^{6m}\right)\left( \prod\limits_{p|k}\left(1+\frac{1}{p}+\frac{1}{p^2}+\frac{1}{p^3}\right)\right)\\
=&k^6\prod\limits_{p|k}\left(1+\frac{1}{p}+\frac{1}{p^2}+\frac{1}{p^3}\right).
\end{align*}
Therefore
\begin{align*}
 \sum\limits_{\substack{d|k\\d\neq k}}d^6(-1)^{w(\frac{k}{d})}\frac{P_3(\frac{k}{d})J_3(\frac{k}{d})\mu(\frac{k}{d})}{\phi(\frac{k}{d})}= -k^6 +k^6\prod\limits_{p|k}\left(1+\frac{1}{p}+\frac{1}{p^2}+\frac{1}{p^3}\right).
\end{align*}
Hence
\begin{align*}
&\sum\limits_{j=1}^{k-1} j^4R_k(j)\\=&-\left(\frac{k^4}{5}\right)\phi(k)+\left(\frac{k^2}{3}\right)\phi(k)\left(-k^2+k^2\prod\limits_{p|k}\left(1+\frac{1}{p}\right)\right)\\&-\left(\frac{1}{30k^2}\right)\phi(k)\left(-k^6 +k^6\prod\limits_{p|k}\left(1+\frac{1}{p}+\frac{1}{p^2}+\frac{1}{p^3}\right)\right)\\
=&-\frac{k^4}{2}\phi(k)+\frac{k^5}{3}\prod\limits_{p|k}\left(1-\frac{1}{p^2}\right)-\frac{k^5}{30}\prod\limits_{p|k}\left(1-\frac{1}{p^4}\right)\\
=&-\frac{k^4}{2}\phi(k)+\frac{k^5}{3}\left(\frac{L(2,\chi_0)}{\zeta(2)}\right)-\frac{k^5}{30}\left(\frac{L(4,\chi_0)}{\zeta(4)}\right) \text{ \cite[Theorem 11.7]{tom1976introduction}}\\
=&-\frac{k^4}{2}\phi(k)+\frac{k^5}{3}\left(\frac{6}{\pi^2}\right)\left(\frac{\pi^2J_2(k)}{6k^2}\right)-\frac{k^5}{30}\left(\frac{90}{\pi^4}\right)\left(\frac{\pi^4J_4(k)}{90k^4}\right)\\
=&-\frac{k^4}{2}\phi(k)+\frac{k^3}{3}J_2(k)-\frac{k}{30}J_4(k).
\end{align*}
Thus,
\begin{align*}
&\frac{\pi^8}{9k^6}\left|
\frac{1}{k^2}\sum\limits_{j=1}^{k-1} j^4G(j, \chi_0)-2\sum\limits_{j=1}^{k-1} j^2G(j, \chi_0)
\right|^2\\&=\frac{\pi^8}{9k^6}\left|
-\frac{k^2}{2}\phi(k)+\frac{k}{3}J_2(k)-\frac{1}{30k}J_4(k)+k^2\phi(k)-\frac{k}{3}J_2(k)
\right|^2\\
&=\frac{\pi^8}{9k^6}\left|
\frac{k^2}{2}\phi(k)-\frac{1}{30k}J_4(k)
\right|^2\\
&=\frac{\pi^8}{k^8}\phi(k)\left(
\frac{k^6}{36}\phi(k)-\frac{k^3}{270}\phi(k)J_4(k)+\frac{1}{8100}\frac{(J_4(k))^2}{\phi(k)}
\right).
\end{align*}
Coming back to equation (\ref{6*}) 
\begin{align*}
&\sum\limits_{\substack{\chi(\text{mod }  k) \\ \chi \text{ even}\\ \chi \neq \chi_0}}|L(4, \chi)|^2\\=&\frac{\pi^8}{k^8}\phi(k)\biggl[
\frac{k^6}{36}\phi(k)+\frac{J_8(k)}{18900}+\frac{J_4(k)}{4050}+\frac{2}{567}J_2(k)
-\frac{k^3}{270}J_4(k)\\&-\frac{k^6}{36}\phi(k)+\frac{k^3}{270}\phi(k)J_4(k)-\frac{1}{8100}\frac{(J_4(k))^2}{\phi(k)}
\biggr]\\
=&\frac{\pi^8}{k^8}\phi(k)\left(\frac{J_8(k)}{18900}+\frac{J_4(k)}{4050}+\frac{2}{567}J_2(k)-\frac{1}{8100}\frac{(J_4(k))^2}{\phi(k)}
\right).
\end{align*}

Hence, equation (\ref{1*}) becomes
\begin{align*}
\sum\limits_{\substack{\chi(\text{mod }  k) \\ \chi \text{ even}}}|L(4, \chi)|^2
=\frac{\pi^8}{27k^8}\phi(k)\left(\frac{J_8(k)}{700}+\frac{J_4(k)}{150}+\frac{2}{21}J_2(k)
\right)
\end{align*}which is what we claimed.
 Hence the average value of $|L(4, \chi)|^2$ over all odd characters modulo $k$ is
$
\frac{2\pi^8}{27k^8}\left(\frac{J_8(k)}{700}+\frac{J_4(k)}{150}+\frac{2}{21}J_2(k)\right)$.

   \end{proof}
   
\section{Further directions}
Though it is possible to extend our method to find higher mean square averages of $L(r,\chi)$, the major difficulty arises in finding the value of $\sum\limits_{\substack{1\leq m\leq k \\ (m, k)=1}}
\frac{1}{\sin^{2r}(\frac{\pi m}{k})}$. It is also possible that our method can be applied to the twisted sum case. But one may have to be ready to undertake longer computations with complexity increasing with respect to the size of $r$.

\section{Acknowledgements}
 The first author thanks the Kerala State Council for Science,Technology and Environment, Thiruvananthapuram, Kerala, India for providing financial support for carrying out research work. The second author thanks the University Grants Commission of India for providing financial support for carrying out research work through their Junior Research Fellowship (JRF) scheme.


\end{document}